\documentclass[11pt]{article} 

\usepackage{amsmath}
\usepackage{amssymb}
\usepackage{graphicx}
\usepackage{color}
\textheight = 22.5 cm
\textwidth = 15.5 cm
\hoffset = -1.5 cm
\voffset = -1.8 cm
\parskip = 3mm
\parindent 5mm

\def\r{\rightarrow}

\newcommand{\fdem}{\hspace*{\fill}~$\Box$\par\endtrivlist\unskip}

\newcommand{\E}{\mathbb{E}}     
\renewcommand{\P}{\mathbb{P}}     
\renewcommand{\L}{\mathbb{L}}

\newcommand{\N}{\mathbb{N}}     

\newcommand{\R}{\mathbb{R}}     
     
\newcommand{\C}{\mathbb{C}}

\newcommand{\X}{\mathbb{X}}

\renewcommand{\dim}{\mathop{\rm dim}}

\renewcommand{\r}{\mathop{\rightarrow}}

\newcommand{\cB}{\mbox{$\cal B$}}
\newcommand{\cC}{\mbox{$\cal C$}}

\newcommand{\cF}{\mbox{$\cal F$}}

\newcommand{\cN}{\mbox{$\cal N$}}

\newcommand{\cX}{\mbox{$\cal X$}}

\newcommand{\Pc}{\widehat{P}}
\newcommand{\Qc}{\widehat{Q}}

\newenvironment{proof}[1]{\textit{Proof#1.\,}}{\fdem}

\newtheorem{atheo}{Theorem}[section]

\newtheorem{alem}[atheo]{Lemma}
\newtheorem{arem}[atheo]{Remark}
\newtheorem{acor}[atheo]{Corollary}
\newtheorem{apro}[atheo]{Proposition}

\title{State-discretization of $V$-geometrically ergodic Markov chains and convergence to the stationary distribution}
\author{Loic HERV\'E and James LEDOUX \footnote{Univ Rennes, INSA Rennes, CNRS, IRMAR-UMR 6625,  F-35000 Rennes, France. 
 Loic.Herve@insa-rennes.fr, James.Ledoux@insa-rennes.fr}
}

\begin{document}

\maketitle
\vspace*{-4mm}
\begin{abstract}
Let $(X_n)_{n \in\N}$ be a $V$-geometrically ergodic Markov chain on a measurable space $\X$ with invariant probability distribution $\pi$. In this paper, we propose a discretization scheme providing a computable sequence $(\widehat\pi_k)_{k\ge 1}$ of probability measures which approximates $\pi$ as $k$ growths to infinity. The probability measure $\widehat\pi_k$ is computed from the invariant probability distribution of a finite Markov chain. The convergence rate in total variation of  $(\widehat\pi_k)_{k\ge 1}$ to $\pi$ is given. As a result, the specific case of first order autoregressive processes with linear and non-linear errors is studied. Finally, illustrations of the procedure for such autoregressive processes are provided, in particular when no explicit formula for $\pi$ is known. 
\end{abstract}
\vspace*{-5mm}
\begin{center}

AMS subject classification : 60J05; 60J22

Keywords : Markov chain, Rate of convergence, Autoregressive models. 
\end{center}


\section{Introduction} 
Let $(\X,d)$ denote a metric space equipped with its Borel $\sigma$-algebra $\cX$. Let $(X_n)_{n\in\N}$ be a Markov chain with state space $(\X,\cX)$ and transition kernel $P$ of the form
\begin{equation} \label{form-P}
\forall x\in\X,\quad P(x,dy) = p(x,y)\, d\mu(y),
\end{equation}
where $p : \X^2\r[0,+\infty)$ is a measurable function and $\mu$ is a positive $\sigma$-additive  
 measure on $(\X,\cX)$. Typically $\X$ is $\R^d$ and $\mu$ is the Lebesgue measure on $\R^d$. Moreover let $v : [0,+\infty)\r[1,+\infty)$ denote an unbounded increasing continuous function such that $v(0)=1$, and let $V : \X\r[1,+\infty)$ be defined by 
\begin{equation} \label{v-V}
\forall x\in\X,\quad V(x) := v\big(d(x,x_0)\big),
\end{equation}
where $x_0\in\X$ is fixed. We assume that $P$ admits an invariant probability measure $\pi$ on $(\X,\cX)$. Since $P$ is of the form (\ref{form-P}), $\pi$ is absolutely continuous with respect to $\mu$, that is $d\pi(y) = \mathfrak{p}(y)\, d\mu(y)$ for some probability density function (pdf) $\mathfrak{p}$. Throughout the paper we assume that 
$$\pi(V) := \int_\X V(y)\, \mathfrak{p}(y)\, d\mu(y)<\infty$$ 
and that $P$ is $V$-geometrically ergodic, that is (e.g. see \cite{MeyTwe93}): there exist $\rho\in(0,1)$ and a positive constant $C\equiv C(\rho)$ such that the following inequality holds for every measurable complex-valued function $f$ on  $\X$ satisfying $|f| \leq V$: 
\begin{equation} \label{V-geo}
\forall n\geq 0,\quad \sup_{x\in\X} \frac{|(P^nf)(x) - \pi(f)|}{V(x)} \leq C\, \rho^n.
\end{equation}
Mention that, for most of the classical $V$-geometrically ergodic Markov chains, the function $V$ is of the form (\ref{v-V}).

Even for simple models as first-order autoregressive models, the explicit computation of the stationary pdf $\mathfrak{p}$ is a difficult issue, and it is only possible for some specific examples. 
In this work, under suitable assumptions on the kernel $p(x,y)$, we propose a discretization procedure  providing a computable sequence $(\widehat\pi_k)_ {k \ge 1}$ of probability measures on $\X$ which approximates the stationary distribution $\pi$ of $P$ in total variation distance. Roughly speaking the probability measure $\widehat\pi_k$ on $\X$ is defined as follows. For every integer $k$, an explicit finite stochastic matrix 
$B_k$ 
is derived from the Markov kernel $P$ by discretization of the kernel $p(x,y)$. Then $\widehat\pi_k$ is defined as a natural extension of the left $B_k$-invariant probability vector. Then the above mentioned convergence of $(\widehat\pi_k)_ {k \ge 1}$ to $ \pi$ in total variation distance is derived in Theorem~\ref{theo-main} from the results of \cite{HerLed14}. Moreover the absolutely continuous part $\mathfrak{p}_k$ of $\widehat\pi_k$ w.r.t.~$\mu$ can be explicitly computed, and the sequence $(\mathfrak{p}_k)_ {k \ge 1}$ is proved to converge to $\mathfrak{p}$ in the usual Lebesgue space $\L^1(\X,\cX,\mu)$ (see Corollary~\ref{cor-main}). 
Applications to the first order (linear) autoregressive models AR(1) and to AR(1) processes with ARCH$(1)$ errors are addressed in Sections~\ref{sec-AR}. The computational issues to get $\mathfrak{p}_k$ are discussed in Section~\ref{ap-algo-ar}. Numerical illustrations are presented in Section~\ref{sec-examp}.

The authors in \cite{Hai98,AndHra00,AndHra07} developed another method to approximate the stationary pdf $\mathfrak{p}$ of linear processes. Their approach consists in approximating 
the stationary pdf $\mathfrak{p}$ of an AR(1) process (i.e.~$X_n = \varrho\, X_{n-1} + \vartheta_n$, see Subsection~\ref{sub_AR1} for details) 
by the sequence $(h_n)_{n \in\N}$ of functions recursively defined by 
\begin{equation} \label{eq-hn}
h_0:=\nu\qquad \text{and} \qquad \forall n\geq1,\ \ h_n(x) := \int_\R \nu(x-\varrho u)\, h_{n-1}(u)\, du 
\end{equation}
where $\nu$ denotes the innovation pdf  (i.e.~the law of $\vartheta_1$).
Hainman in \cite{Hai98} proved that $(h_n)_{n \in\N}$ uniformly converges to $\mathfrak{p}$ with geometric rate under strong assumptions on the support of $\nu$. The authors in \cite{AndHra00} proved that $(h_n)_{n \in\N}$ converges point-wise to $\mathfrak{p}$ under some mild assumptions on the Fourier transform of $\nu$, and they established the uniform convergence with geometric rate in the case when $\nu$ is the exponential pdf. In \cite{AndHra07} the uniform convergence of $(h_n)_{n \in\N}$  to $\mathfrak{p}$, with geometric rate, is extended to general causal linear processes under mild assumptions on the noise process. Closely linked to these works, we also mention the paper \cite{Log04} which studies the characteristic function of the stationary pdf $\mathfrak{p}$ for a threshold AR(1) model with noise process having Laplace distribution, as well as the paper \cite{AndRan05} which investigates $\mathfrak{p}$ for absolute autoregressive associated with noise process having Gaussian, Cauchy or Laplace distribution (from \cite{ChaTon86} this issue may be reduced to the computation of the stationary pdf of an auxiliary AR(1) process).

Due to \cite{AndHra07}, the approximation of $\mathfrak{p}$ by $(h_n)_{n \in\N}$ via Equation~(\ref{eq-hn}) is theoretically efficient for linear processes since the rate of convergence  is geometric. However, except when the noise process has a special usual law, the exact calculation of the integral in (\ref{eq-hn}) can not be carried out. Moreover, any numerical method recursively providing approximations of the integrals $h_1,\ldots,h_p$ for some $p\geq 1$ induces some cumulative  errors. For linear processes our method is thus an alternative way to approximate $\mathfrak{p}$: the rate of convergence in our work is not geometric (a priori), but for some $k\geq 1$ the approximation $\mathfrak{p}_k$ of $\mathfrak{p}$ as above described can be directly computed (without any recursive procedure).  Section~\ref{sec-examp} provides numerical evidence for robustness of the method. Moreover our approach applies to any $V$-geometrical Markov chain (not only to linear processes) admitting a probability kernel $P(x,dy)$ of the form (\ref{form-P}), provided that the kernel $p(\cdot,\cdot)$ has some suitable Lipschitz-regularity properties (see Assumption~(\ref{Lk-finite})). For instance our method applies to autoregressive process with ARCH$(1)$ errors (see Subsections~\ref{sec-arch} and \ref{subsec-num-arch}).

The invariant pdf $\mathfrak{p}$ satisfies the functional equation $T \mathfrak{p}= \mathfrak{p}$, where $T$ is the linear operator defined by $(Tf)(\cdot) = \int_\X p(y,\cdot)\, f(y)\, d\mu(y)$. However this operator $T$ is not used in this work. Indeed that is not $T$, but $P$, which is approximated by a sequence of finite-rank operators $(\Pc_k)_{k\ge 1}$. The reason for this is that $P$ has good spectral properties on the usual weighted-supremum Banach space $\cB_1$ associated with $V$ due to the $V$-geometrical ergodicity assumption. Also note that the classical theory of perturbed operators does not apply here because the sequence $(\Pc_k)_{k\ge 1}$  does not converge to $P$ for the usual operator norm on $\cB_1$ (in particular $P$ is not a compact operator on $\cB_1$). To get around this difficulty, we use the results of \cite{HerLed14} based on the Keller-Liverani perturbation theorem \cite{KelLiv99}: this method requires an auxiliary weaker operator norm on $\cB_1$ (see Lemma~\ref{lem-appro}), as well as uniform (in $k$) drift inequalities for $\Pc_k$ (see Lemma~\ref{lem-drift}). In the context of perturbed $V$-geometrically ergodic Markov chains, the interest of using an auxiliary norm appears in \cite{ShaStu00} (see \cite{Kel82} for similar issues in ergodic theory). For recent works related to this weak perturbation method in Markovian models, see \cite{FerHerLed13,RudSch18,Tru17} and the references therein.  

\section{Definition of the approximating probability measure $\widehat\pi_k$} \label{sect-disc}

Let $x_0\in\X$ be fixed and, for every integer $k\geq1$, let us consider any $\X_k\in\cX$ such that   
$$\big\{x\in\X\, :\, d(x,x_0) < k\big\}\, \subseteq\, \X_k\,  \subseteq\, \big\{x\in\X\, :\, d(x,x_0) \leq k\big\}.$$

Let us introduce the following finite partitions of the sequence of spaces $(\X_k)_{k\geq 1}$. 

\noindent{\bf Definition (A).} {\it Let $(\delta_k)_{k\geq 1}$  be a sequence of positive real numbers such that $\lim_k\delta_k=0$. For every integer $k\geq1$, we consider a finite family $\{\X_{j,k}\}_{j\in I_k}$ of disjoint measurable subsets of $\X_k$ such that 
\begin{equation} \label{cond-diam}
\X_k = \bigsqcup_{j\in I_k} \X_{j,k} \qquad \text{with}\ \ \forall j\in I_k,\quad \text{diam}( \X_{j,k}) \leq \delta_k.
\end{equation}
where $\text{diam}( \X_{j,k}) := \sup\big\{d(x,x')\, :\, (x,x')\in\X_{j,k}\big\}$. The positive real number $\delta_k$ must be thought of as  the mesh of the partition $\{\X_{j,k}\}_{j\in I_k}$.
}

Define   
$$\forall k\geq1,\ \forall (x,y)\in\X^2,\quad p_k(x,y) := 1_{\X_k}(y)\sum_{i\in I_k} 1_{\X_{i,k}}(x) \, \inf_{t\in \X_{i,k}} p(t,y),$$
Observe that $p_k \leq p$. 
Below $f : \X\r\C$ denotes any bounded measurable function on $\X$ where $\C$ denoted the set of complex numbers. We define the following non-negative kernel $\Qc_k$: 
\begin{eqnarray}
\forall x\in\X,\quad (\Qc_k f)(x) &:=& \int_\X f(y)\, p_k(x,y)\, d\mu(y) \nonumber \\
&=& \sum_{i\in I_k} \bigg(\int_{\X_k} f(y)\, \inf_{t\in \X_{i,k}} p(t,y)\, d\mu(y)\bigg)1_{\X_{i,k}}(x). \label{dec-Pkf}
\end{eqnarray}
Note that $\Qc_k f$ vanishes on $\X\setminus \X_k$. Let $\psi_k$ be the non-negative function on $\X$ defined by 
$$\psi_k := 1_{\X} - \Qc_k1_{\X}.$$
We have $\psi_k\equiv 1$ on $\X\setminus \X_k$, and  $0\leq \psi_k \leq 1_{\X}$ since $0\leq \Qc_k1_{\X} \leq P1_{\X}=1_{\X}$.   
Next define the following kernel: 
\begin{equation} \label{def-hatPk}
\forall x\in\X,\quad (\Pc_kf)(x) := (\Qc_k f)(x) + f(x_0)\, \psi_k(x).
\end{equation}
Then $\Pc_k$ is a Markov kernel on $(\X,\cX)$, i.e.~$\Pc_k$ is non-negative ($f\geq0 \Rightarrow \Pc_kf\geq0$) and $\Pc_k1_{\X} = 1_{\X}$. 

Moreover we deduce from (\ref{def-hatPk}) and (\ref{dec-Pkf}) that $\Pc_k(f) \in \cF_k$, where $\cF_k$ is the finite-dimensional space spanned by the system of functions  
$\big\{1_{\X_{i,k}},\ i\in I_k\big\}\cup\{\psi_k\}$. 
Observe that $1_{\X}\in \cF_k$ from $1_{\X} = \Qc_k1_{\X}+\psi_k$ and  (\ref{dec-Pkf}). Now define 
\vspace*{-2mm}
$$b_{k} := 1_{\X}-1_{\X_k} = 1_{\X\setminus\X_k}.$$
Then $b_k\in\cF_k$ since $1_{\X}\in\cF_k$ and $b_k = 1_{\X} - \sum_{i\in I_k} 1_{\X_{i,k}}$. 
Thus another basis of $\cF_k$ is given by 
\begin{equation} \label{Vk} 
\cC_k := \big\{1_{\X_{i,k}},\ i\in I_k\big\} \cup \{b_{k}\}.
\end{equation} 
Let $\{x_{i,k}\}_{i\in I_k}$ be such that $x_{i,k}\in\X_{i,k}$ and let  $\overline{x}_k\in\X\setminus\X_k$. Then we have for every $g\in\cF_k$: 
\begin{equation} \label{dec-Vk} 
g=\sum_{i\in I_k} g(x_{i,k})\, 1_{\X_{i,k}} \, + g(\overline{x}_k)\, b_k.
\end{equation} 

\vspace*{-6mm}
Now, from $\Pc_k(\cF_k) \subset \cF_k$ we can define the linear map $P_k : \cF_k\r\cF_k$ as the restriction of $\Pc_k$ to $\cF_k$. Let $N_k:=\dim\cF_k = \text{Card}\, (I_k)+1$, and let $B_k$ 
be the $N_k\times N_k-$matrix defined as the matrix of $P_k$ with respect to the basis $\cC_k$. Note that 
\begin{equation} \label{Bk-bk-zero} 
P_kb_k = \Pc_kb_k = \Qc_kb_k + b_k(x_0)\psi_k = 0,
\end{equation}  
and that for every $j\in I_k$ 
\begin{eqnarray*}
P_k 1_{\X_{j,k}} &=& \Pc_k 1_{\X_{j,k}} \\
&=& \sum_{i\in I_k} (\Pc_k 1_{\X_{j,k}})(x_{i,k})\, 1_{\X_{i,k}} \, + (\Pc_k 1_{\X_{j,k}} )(\overline{x}_k)\, b_k \qquad \qquad \text{(from (\ref{dec-Vk}))}\\ 
&=& \sum_{i\in I_k} \big[(\Qc_k 1_{\X_{j,k}})(x_{i,k}) + 1_{\X_{j,k}}(x_0)\, \psi_k(x_{i,k})\big]\, 1_{\X_{i,k}} + \big[(\Qc_k 1_{\X_{j,k}} )(\overline{x}_k) + 1_{\X_{j,k}}(x_0)\, \psi_k(\overline{x}_k)\big]\, b_k \\ 
&=& \sum_{i\in I_k} \big[(\Qc_k 1_{\X_{j,k}})(x_{i,k}) + 1_{\X_{j,k}}(x_0)\, \psi_k(x_{i,k})\big]\, 1_{\X_{i,k}} +  1_{\X_{j,k}}(x_0)\, b_k.
\end{eqnarray*}
The previous equalities show that $B_k$ is a non-negative matrix. Moreover Equality $P_k1_{\X} = 1_{\X}$ reads as matrix equality $B_k\cdot{\bf 1}_k = {\bf 1}_k$  
where ${\bf 1}_k$ is the coordinate vector of $1_{\X}$ in the basis $\cC_k$ and is given by ${\bf 1}_k = (1,\ldots,1)^{\top}$. The symbol $\cdot^{\top}$ stands for the transpose  operation.  
Thus $B_k$ is a stochastic matrix. Accordingly there exists a non-zero row-vector  $\pi_k\in[0,+\infty)^{N_k}$ such that  
\begin{equation} \label{lem-pik}
\pi_k\cdot B_k = \pi_k,\quad \text{and} \quad \pi_k\cdot{\bf 1}_k=1.
\end{equation}
Note that the last component of $\pi_k$ (i.e.~the component associated with $b_k$) is zero since the last column of $B_k$ is zero from (\ref{Bk-bk-zero}). We denote by  $\pi_{i,k}$ the component of $\pi_k$ associated with the element $1_{\X_{i,k}}$ of the basis $\cC_k$, so that the coordinate vector of $\pi_k$ in $\cC_k$ is $(\{\pi_{i,k}\}_{i\in I_k}\, ,\, 0)$. 
For every $k\geq 1$ we set 
\begin{equation} \label{def-hat-pik}
\widehat\pi_k(f) := \pi_k\cdot F_k
\end{equation}
where $F_k\equiv F_k(f)$ is the coordinate vector of $\Pc_kf$ in the basis $\cC_k$. 
\begin{apro} \label{def_approx}
$\widehat\pi_k$ defines a  $\Pc_k$-invariant probability measure on $(\X,\cX)$. Moreover we have    
\begin{equation} \label{dec-hat-pik}
\widehat\pi_k(dy) = \mathfrak{p}_k(y)\, d\mu(y) + \bigg(1-\int_\X\mathfrak{p}_k(y)\, d\mu(y)\bigg)\delta_{x_0},
\end{equation}
where $\delta_{x_0}$ is the Dirac distribution at $x_0$, and where $\mathfrak{p}_k$ is the non-negative function defined by 
\begin{equation} \label{def-density-pk} 
\forall y\in\X,\quad \mathfrak{p}_k(y) :=  1_{\X_k}(y)\sum_{i\in I_k} \pi_{i,k} \inf_{t\in \X_{i,k}} p(t,y),
\end{equation}
\end{apro}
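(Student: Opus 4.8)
The plan is to turn the implicit definition (\ref{def-hat-pik}) into an explicit integral formula, from which every assertion of the proposition will follow.

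First, I would unwind $\widehat\pi_k(f) = \pi_k\cdot F_k$. Since $\Pc_k f\in\cF_k$, identity (\ref{dec-Vk}) gives the coordinate vector $F_k$ of $\Pc_k f$ explicitly: its $i$-th entry is $(\Pc_k f)(x_{i,k})$ and its last entry is $(\Pc_k f)(\overline{x}_k)$. As the last component of $\pi_k$ vanishes, this yields $\widehat\pi_k(f)=\sum_{i\in I_k}\pi_{i,k}\,(\Pc_k f)(x_{i,k})$. Expanding $(\Pc_k f)(x_{i,k})$ through (\ref{def-hatPk}) and (\ref{dec-Pkf}) gives $(\Pc_k f)(x_{i,k})=\int_{\X_k}f(y)\inf_{t\in\X_{i,k}}p(t,y)\,d\mu(y)+f(x_0)\,\psi_k(x_{i,k})$, and after interchanging the finite sum with the integral and invoking (\ref{def-density-pk}) I obtain the key formula
$$\widehat\pi_k(f)=\int_\X f(y)\,\mathfrak{p}_k(y)\,d\mu(y)+c_k\,f(x_0),\qquad c_k:=\sum_{i\in I_k}\pi_{i,k}\,\psi_k(x_{i,k}).$$

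Next I would read off the decomposition. Taking $f=1_{\X}$ and using $\Pc_k 1_{\X}=1_{\X}$ (whose coordinate vector is $\mathbf{1}_k$) together with the normalization $\pi_k\cdot\mathbf{1}_k=1$ from (\ref{lem-pik}) shows $\widehat\pi_k(1_{\X})=1$, hence $c_k=1-\int_\X\mathfrak{p}_k\,d\mu$. The displayed formula is exactly integration against the measure $\mathfrak{p}_k\,d\mu+c_k\,\delta_{x_0}$; since $\pi_{i,k}\geq0$ forces $\mathfrak{p}_k\geq0$ and $\psi_k\geq0$ forces $c_k\geq0$, this is a non-negative measure of total mass $1$, i.e.\ a probability measure, and it agrees with $\widehat\pi_k$ on every bounded measurable $f$. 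This is precisely (\ref{dec-hat-pik}).

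For invariance, I would use that $\Pc_k f\in\cF_k$, so $\Pc_k(\Pc_k f)=P_k(\Pc_k f)$ has coordinate vector $B_k F_k$ (because $B_k$ is the matrix of $P_k$ in $\cC_k$). Hence $\widehat\pi_k(\Pc_k f)=\pi_k\cdot(B_k F_k)=(\pi_k\cdot B_k)\cdot F_k=\pi_k\cdot F_k=\widehat\pi_k(f)$, the penultimate equality being the left-invariance $\pi_k\cdot B_k=\pi_k$ of (\ref{lem-pik}); thus $\widehat\pi_k$ is $\Pc_k$-invariant. There is no deep obstacle: the argument is essentially bookkeeping in the basis $\cC_k$. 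The only points demanding care are the two vector conventions (treating $\pi_k$ as a row vector and $F_k$ as a column vector so that $\pi_k\cdot(B_k F_k)=(\pi_k\cdot B_k)\cdot F_k$), and the passage from the a priori linear functional $f\mapsto\pi_k\cdot F_k$ to a bona fide measure, which is settled once the explicit integral formula is in hand. It is also worth noting, for well-definedness, that $\Pc_k f$ is constant on each $\X_{i,k}$ (as $\Qc_k f$ and $\psi_k$ are), so the values $(\Pc_k f)(x_{i,k})$ do not depend on the chosen representatives $x_{i,k}$.
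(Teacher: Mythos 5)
Your proposal is correct and follows essentially the same route as the paper: both derive the explicit representation $\widehat\pi_k(f)=\int_\X f\,\mathfrak{p}_k\,d\mu+c_k f(x_0)$ by expanding the coordinates of $\Pc_kf$ in the basis $\cC_k$ (the paper writes $c_k=1-\sum_i\pi_{i,k}m_{i,k}(1_\X)$, which equals your $\sum_i\pi_{i,k}\psi_k(x_{i,k})$), check non-negativity and total mass one, and prove invariance via $\pi_k\cdot B_k\cdot F_k=\pi_k\cdot F_k$. Your remark that $\Pc_kf$ is constant on each $\X_{i,k}$, so the coordinates do not depend on the representatives $x_{i,k}$, is a small well-definedness point the paper leaves implicit.
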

Note that Formula (\ref{def-density-pk}) involves the infimum of the function $t \mapsto p(t,y)$ on each subset $\X_{i,k}$. This is a technical choice to ensure that Lemma~\ref{lem-drift} holds true for $\widehat P_k$. Specifically, this a simple choice to simplify the convergence analysis in Section \ref{sect-conv-gene} of  the approximation scheme.

\begin{proof}{} 
Recall that $b_k$ is defined by $b_k = 1_{\X} - \sum_{i\in I_k} 1_{\X_{i,k}}$. From $\psi_k := 1_{\X} - \Qc_k1_{\X}$ it follows that $\psi_k  = b_k + \sum_{i\in I_k} 1_{\X_{i,k}}- \Qc_k1_{\X}$. 
Define 
\begin{equation} \label{def-mik-f}
m_{i,k}(f) := \int_{\X_k} f(y)\, \inf_{t\in \X_{i,k}} p(t,y)\, d\mu(y)
\end{equation}
and observe that $\Qc_k f = \sum_{i\in I_k} m_{i,k}(f)\, 1_{\X_{i,k}}$. Then we deduce from (\ref{dec-Pkf}) and (\ref{def-hatPk}) that 
\begin{eqnarray*}
\Pc_kf := (\Qc_k f) + f(x_0)\, \psi_k &=& \sum_{i\in I_k} m_{i,k}(f)\, 1_{\X_{i,k}} + f(x_0)\big(b_k + \sum_{i\in I_k} 1_{\X_{i,k}}- \Qc_k1_{\X}\big) \\ 
&=& \sum_{i\in I_k} \big[m_{i,k}(f) + f(x_0) - f(x_0)\, m_{i,k}(1_{\X}) \big]\, 1_{\X_{i,k}} + f(x_0) b_k, 
\end{eqnarray*}
so that (\ref{def-hat-pik}) and $\sum_{i\in I_k} \pi_{i,k}=1$ give  
\begin{eqnarray}
\widehat\pi_k(f) &:=& \sum_{i\in I_k} \pi_{i,k}\, [ m_{i,k}(f) + f(x_0) - f(x_0)\, m_{i,k}(1_{\X}) \big] \nonumber \\
&=& \sum_{i\in I_k} \pi_{i,k}\,  m_{i,k}(f) + f(x_0)\bigg(1-\sum_{i\in I_k} \pi_{i,k}\, m_{i,k}(1_{\X})\bigg). \label{fle-hat-pi-k}
\end{eqnarray}
This proves Formula~(\ref{dec-hat-pik}). 
Now we prove that $\widehat\pi_k$ defines a  $\Pc_k$-invariant probability measure on $(\X,\cX)$. Note 
that 
$$\forall i\in I_k,\quad  m_{i,k}(1_{\X}) \leq \int_{\X} p(x_{i,k},y)\, d\mu(y) = (P1_{\X})(x_{i,k})=1,$$ 
thus  
$$\int_\X\mathfrak{p}_k(y)\, d\mu(y) = \sum_{i\in I_k} \pi_{i,k}\, m_{i,k}(1_{\X})\leq1.$$ 
It follows from this remark and from (\ref{fle-hat-pi-k}) that $\widehat\pi_k$ is a probability measure on $\X$. 
Finally $ B_k\cdot F_k$ is the coordinate vector of $\Pc_k^{\, 2}f$ in $\cC_k$ since $\Pc_kf\in \cF_k$ and $F_k$ is the coordinate vector of $\Pc_kf$ in $\cC_k$. 
Consequently  we deduce from (\ref{def-hat-pik}) and (\ref{lem-pik}) that 
$$\widehat\pi_k(\Pc_kf) := \pi_k\cdot B_k\cdot F_k = \pi_k\cdot F_k = \widehat\pi_k(f).$$
Thus $\widehat\pi_k$ is $\Pc_k$-invariant. 
\end{proof}

\section{Convergence of $(\widehat\pi_k)_ {k \ge 1}$ to $\pi$ in total variation distance} \label{sect-conv-gene}
The metric space $\X$ is equipped with a sequence of partitions satisfying Definition~{\bf (A)}. The Markov kernel $P$ on $\X$ is assumed to be of the form (\ref{form-P}). Let $\theta\in(0,1]$. For $k\geq 1$, $i\in I_k$, and $y\in\X_k$, we denote by $L_{i,k,\theta}(y)$ the following quantity in $[0,+\infty]$ 
\begin{equation} \label{L-i-k-theta}
L_{i,k,\theta}(y) := \sup\left\{\frac{|p(x,y) - p(x',y)|}{d(x,x')^\theta},\ (x,x')\in \X_{i,k}\times\X_{i,k},\ x\neq x'\right\}.
\end{equation}
Finally we assume that $P$ satisfies the following assumptions  
\begin{subequations}
\begin{gather} 
\exists\, \delta\in (0,1),\ \exists M\in(0,+\infty),\quad PV \leq \delta V + M 
\label{drift} \\[2mm]
\alpha_k := \sup_{u\in\X_k} \frac{P\big(u,\X\setminus\X_k\big)}{V(u)}  \longrightarrow 0 \quad \text{when}\ k\r+\infty \label{def-alpha} \\
\exists\, \theta\in(0,1],\ \forall k\geq 1,\ \ell_{k,\theta} := \max_{i\in I_k} 
\int_{\X_k} L_{i,k,\theta}(y) \, d\mu(y) < \infty, \quad \text{and} \ \lim_{k\r+\infty} \ell_{k,\theta} \, \delta_k^{\theta}  = 0. \label{Lk-finite} 
\end{gather}
\end{subequations}
Actually (\ref{drift}) is a drift type inequality (see \cite{MeyTwe93}) which comes from the $V$-geometric ergodicity assumption (\ref{V-geo}).  Technical conditions (\ref{def-alpha}) and (\ref{Lk-finite}) are used to control the weak convergence of $(\widehat P_k)_{k \ge 1}$ to $P$ (see Lemma~\ref{lem-appro}). In the first order autoregressive models of Section~\ref{sec-AR}, condition (\ref{def-alpha}) reduces to a polynomial moment condition on the noise (see (\ref{m}) for instance), and  Condition (\ref{Lk-finite}) reduces to the control of the derivative of the noise (see (\ref{lip}) for instance).
\begin{atheo} \label{theo-main}
Let $(\delta_k)_{k\ge 1}$ be a sequence of positive real numbers from Definition~{\bf (A)}. Assume that $P$ is a $V$-geometrically ergodic Markov kernel of the form (\ref{form-P}), and finally that Assumptions~(\ref{drift})-(\ref{Lk-finite}) hold. Then the probability measures $\widehat\pi_k$ on $\X$ given in (\ref{dec-hat-pik}) are such that $\|\pi-\widehat\pi_k\|_{TV}\r0$ when $k\r+\infty$, more precisely: 
\begin{equation}
\|\pi-\widehat\pi_k\|_{TV} = 
\text{O}\big(|\ln \tau_k|\, \tau_k\big) \quad \text{with} \quad \tau_k = 2\, \max\bigg(\frac{1}{v(k)}\ ,\, \alpha_k + 
\ell_{k,\theta}\,  \delta_k^\theta
\bigg). \label{estim-VT}
\end{equation}
\end{atheo}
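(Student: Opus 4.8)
The plan is to read the $V$-geometric ergodicity hypothesis (\ref{V-geo}) as a spectral-gap statement for $P$ on the weighted-supremum Banach space $\cB_1=\{f:\|f\|_1:=\sup_{x\in\X}|f(x)|/V(x)<\infty\}$: the value $1$ is a simple isolated eigenvalue of $P$ with rank-one eigenprojection $\Pi f=\pi(f)\,1_{\X}$, the rest of the spectrum being contained in a disk of radius $\le\rho<1$. Since $\Pc_k$ is a Markov kernel leaving $\cF_k$ invariant, $1$ is likewise an eigenvalue of $\Pc_k$ on $\cB_1$ and its associated invariant probability is exactly $\widehat\pi_k$ (by the Proposition of Section~\ref{sect-disc}); once $1$ is shown to be the simple dominant eigenvalue for large $k$, the corresponding eigenprojection is $\widehat\Pi_k f=\widehat\pi_k(f)\,1_{\X}$. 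The theorem then reduces to estimating $\Pi-\widehat\Pi_k$, because for every $f$ with $|f|\le1$ one has $\|f\|_1\le1$ and $(\Pi-\widehat\Pi_k)f=(\pi(f)-\widehat\pi_k(f))\,1_{\X}$, so that $\|\pi-\widehat\pi_k\|_{TV}=\sup_{|f|\le1}|\pi(f)-\widehat\pi_k(f)|$ is dominated, up to a fixed multiplicative constant, by the norm of $\Pi-\widehat\Pi_k$ measured from $\cB_1$ into the auxiliary weaker norm.

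As emphasized in the introduction, the operators $\Pc_k$ do not converge to $P$ in operator norm on $\cB_1$, so classical analytic perturbation theory is unavailable; I would instead invoke the weak-perturbation machinery of \cite{HerLed14}, built on the Keller--Liverani theorem \cite{KelLiv99}, which requires two inputs. First, a uniform Lasota--Yorke / drift control: the drift (\ref{drift}) for $P$ must be transferred into drift inequalities for $\Pc_k$ that are uniform in $k$, which is the content of Lemma~\ref{lem-drift}; this keeps the essential spectral radii of the $\Pc_k$ uniformly below $1$ and yields uniform power bounds $\sup_k\|\Pc_k^{\,n}\|_1<\infty$. Second, a quantitative weak-norm approximation $\sup_{\|f\|_1\le1}\|(P-\Pc_k)f\|_0\le c\,\tau_k$ in the auxiliary norm $\|\cdot\|_0$, which is Lemma~\ref{lem-appro}; the three contributions to $\tau_k$ enter naturally here, namely $\alpha_k$ from the mass $P(u,\X\setminus\X_k)$ discarded by the cutoff (\ref{def-alpha}), the term $\ell_{k,\theta}\,\delta_k^\theta$ from replacing $p(t,y)$ by $\inf_{t\in\X_{i,k}}p(t,y)$ on each cell through the H\"older modulus (\ref{L-i-k-theta})--(\ref{Lk-finite}), and $1/v(k)$ from the tail region $\X\setminus\X_k$ on which $V\ge v(k)$.

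With these two hypotheses in hand, the abstract results of \cite{HerLed14} apply and give, for all large $k$, that $\Pc_k$ has a spectral gap on $\cB_1$ with $1$ as simple dominant eigenvalue, together with the quantitative projection bound $\|\Pi-\widehat\Pi_k\|_{1\to0}=\text{O}\big(\tau_k\,|\ln\tau_k|\big)$. The logarithmic factor is intrinsic to the quantitative form of the Keller--Liverani estimate and is not something one sharpens by hand; it is precisely what propagates into (\ref{estim-VT}). Combining this with the reduction of the first paragraph, $\|\pi-\widehat\pi_k\|_{TV}\le c'\,\|\Pi-\widehat\Pi_k\|_{1\to0}$, yields the announced rate.

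The main obstacle is Lemma~\ref{lem-appro}: exhibiting a single weak norm $\|\cdot\|_0$ on $\cB_1$ that is at once weak enough for $\sup_{\|f\|_1\le1}\|(P-\Pc_k)f\|_0$ to tend to $0$ (even though the strong-norm distance stays bounded away from $0$) and strong enough to recover the total-variation distance between the invariant measures, then matching this norm to the three error terms of $\tau_k$ and checking its compatibility with the uniform drift of Lemma~\ref{lem-drift} so that the Keller--Liverani hypotheses genuinely hold. By comparison, the identification of the perturbed eigenvalue as exactly $1$ (using that $\Pc_k$ is Markovian with invariant measure $\widehat\pi_k$), the passage from the projection bound to (\ref{estim-VT}), and the bookkeeping of the logarithmic factor are routine once the two lemmas are established.
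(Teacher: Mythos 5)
Your overall architecture is exactly the paper's: read (\ref{V-geo}) as a spectral gap for $P$ on $\cB_1$, invoke the weak perturbation results of \cite{HerLed14} (Prop.~2.1(b), built on the Keller--Liverani theorem \cite{KelLiv99}), and feed them with a uniform drift inequality for the $\Pc_k$ (Lemma~\ref{lem-drift}) and a quantitative approximation estimate (Lemma~\ref{lem-appro}); the three contributions $1/v(k)$, $\alpha_k$ and $\ell_{k,\theta}\,\delta_k^\theta$ to $\tau_k$ are attributed to the correct sources, and the reduction of $\|\pi-\widehat\pi_k\|_{TV}$ to a bound on $|\pi(f)-\widehat\pi_k(f)|$ over $|f|\le 1$ is the right one.

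However, the approximation condition you state is oriented the wrong way, and as written it is false. You require $\sup_{\|f\|_1\le1}\|(P-\Pc_k)f\|_0\le c\,\tau_k$, i.e.\ you measure the perturbation from the unit ball of $\cB_1$ into the sup-norm. Since $V\ge1$ we have $\cB_0\subset\cB_1$ and $\|\cdot\|_1\le\|\cdot\|_0$, so this is a \emph{stronger} requirement than operator-norm convergence on $\cB_1$ --- which the paper explicitly notes does not hold. Concretely, take $f:=V$ (so $\|f\|_1=1$): for $x\in\X\setminus\X_k$ one has $(\Pc_kV)(x)=V(x_0)\,\psi_k(x)\le 1$, while $(PV)(x)$ is unbounded in $x$ in all the intended examples, so $\|(P-\Pc_k)V\|_0=\infty$. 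The correct condition --- and what Lemma~\ref{lem-appro} actually asserts --- is $\sup_{f\in\cB_0,\,\|f\|_0\le1}\|\Pc_kf-Pf\|_1\le\tau_k$: the perturbation is small as an operator from $(\cB_0,\|\cdot\|_0)$ into $(\cB_1,\|\cdot\|_1)$, this $\cB_0\to\cB_1$ operator norm being the ``auxiliary weaker operator norm'' of the introduction (it is dominated by both the $\cB_0\to\cB_0$ and the $\cB_1\to\cB_1$ norms). With this orientation the division by $V(x)$ tames the tail region, giving the $2/v(k)$ term on $\X\setminus\X_k$ and the $\alpha_k+\ell_{k,\theta}\,\delta_k^\theta$ term on $\X_k$, and the estimate is provable; with yours it is not, so the ``single weak norm'' you describe as the main obstacle cannot exist. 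The rest of your sketch (uniform drift, identification of $\widehat\pi_k$ as the $\Pc_k$-invariant probability, the logarithmic factor inherited from the Keller--Liverani bound) matches the paper once this is corrected.
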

Let $(\cB_0,\|\cdot\|_0)$ denote the Banach space of bounded measurable $\C$-valued functions on $\X$ equipped with the norm $\|f\|_0:=\sup_{x\in\X}|f(x)|$. Then (\ref{estim-VT}) means that 
$$\forall k\geq 1,\ \forall f\in\cB_0, \quad \big|\pi(f)-\widehat\pi_k(f)\big| \leq \gamma_k\, \|f\|_0$$
with $\gamma_k=\text{O}\big(|\ln \tau_k|\, \tau_k\big)$. Recall that $\pi(dy) = \mathfrak{p}(y)d\mu(y)$. Assume that $\mu(\{x_0\})=0$. Then, using (\ref{dec-hat-pik}), the previous inequalities applied to $f:=1_{\{x_0\}}$ imply
$$0\leq 1-\int_\X\mathfrak{p}_k(y)\, d\mu(y) \leq \gamma_k.$$ 
Hence 
$$\forall k\geq 1,\ \forall f\in\cB_0,\quad \big|\int_\X f(y)\, \mathfrak{p}(y)\, d\mu(y)  - \int_\X f(y)\,  \mathfrak{p}_k(y)\, d\mu(y) \big| \leq 2\, \gamma_k\, \|f\|_0,$$
from which we deduce the following corollary.
\begin{acor} \label{cor-main} 
Assume that the assumptions of Theorem~\ref{theo-main} hold and that $\mu(\{x_0\})=0$. Then the sequence $(\mathfrak{p}_k)_ {k \ge 1}$ given in (\ref{def-density-pk}) converges to $\mathfrak{p}$ in the usual Lebesgue space $\L^1(\X,\cX,\mu)$, more precisely  
\begin{equation} \label{estim-L1} 
\int_\X\big|\mathfrak{p}(y) - \mathfrak{p}_k(y)\big|\, d\mu(y) = \text{O}\big(|\ln \tau_k|\, \tau_k\big). 
\end{equation}
\end{acor}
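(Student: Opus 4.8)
The plan is to obtain the corollary directly from the inequality established just above the statement,
$$\forall k\geq 1,\ \forall f\in\cB_0,\quad \Big|\int_\X f(y)\, \mathfrak{p}(y)\, d\mu(y)  - \int_\X f(y)\,  \mathfrak{p}_k(y)\, d\mu(y) \Big| \leq 2\, \gamma_k\, \|f\|_0,$$
where $\gamma_k=\text{O}(|\ln \tau_k|\, \tau_k)$, by the variational (duality) characterization of the $\L^1(\X,\cX,\mu)$-norm. Concretely, I would regard $g_k := \mathfrak{p}-\mathfrak{p}_k$ as an element of $\L^1(\X,\cX,\mu)$ and test the displayed inequality against a function $f$ realizing (or nearly realizing) the supremum $\int_\X|g_k|\, d\mu = \sup\{\,|\int_\X f\, g_k\, d\mu|\ :\ f\in\cB_0,\ \|f\|_0\leq 1\,\}$.

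First I would check that $g_k\in\L^1(\X,\cX,\mu)$: the density $\mathfrak{p}$ is integrable because $\pi$ is a probability measure, and $\mathfrak{p}_k$ is integrable because $\int_\X\mathfrak{p}_k\, d\mu=\sum_{i\in I_k}\pi_{i,k}\, m_{i,k}(1_\X)\leq 1$, as shown in the proof of Proposition~\ref{def_approx}. Next I would take the explicit test function $f:=\text{sgn}(g_k)$, i.e.~$f(y)=1$ where $g_k(y)>0$, $f(y)=-1$ where $g_k(y)<0$, and $f(y)=0$ otherwise. This $f$ is a bounded, Borel-measurable, $\R$-valued (hence $\C$-valued) function on $\X$ with $\|f\|_0\leq 1$, so $f\in\cB_0$, and by construction $f(y)\, g_k(y)=|g_k(y)|$ for every $y\in\X$.

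With this choice the left-hand side of the displayed inequality equals $\int_\X|g_k|\, d\mu=\int_\X|\mathfrak{p}-\mathfrak{p}_k|\, d\mu$, while the right-hand side is at most $2\gamma_k$. Thus
$$\int_\X\big|\mathfrak{p}(y)-\mathfrak{p}_k(y)\big|\, d\mu(y)\ \leq\ 2\, \gamma_k\ =\ \text{O}\big(|\ln \tau_k|\, \tau_k\big),$$
which is exactly (\ref{estim-L1}), the constant factor $2$ being absorbed in the $\text{O}$. In particular $\int_\X|\mathfrak{p}-\mathfrak{p}_k|\, d\mu\to 0$, giving the announced $\L^1$-convergence.

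There is essentially no serious obstacle here, since the quantitative work is already contained in Theorem~\ref{theo-main} and in the paragraph deriving the bound $2\gamma_k\|f\|_0$ from the decomposition (\ref{dec-hat-pik}) together with the hypothesis $\mu(\{x_0\})=0$. The only point requiring a moment of care is to confirm that the discontinuous test function $f=\text{sgn}(g_k)$ genuinely lies in $\cB_0$ (it does, being bounded and measurable), since this is precisely what lets the duality bound reproduce the $\L^1$-norm exactly rather than merely up to approximation; should one prefer to avoid $\text{sgn}$, the same conclusion follows by taking the supremum over simple functions $f$ with $|f|\leq 1$.
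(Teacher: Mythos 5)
Your proposal is correct and follows essentially the same route as the paper: the paper establishes the bound $\big|\int_\X f\,\mathfrak{p}\,d\mu - \int_\X f\,\mathfrak{p}_k\,d\mu\big|\leq 2\gamma_k\|f\|_0$ and then simply states that the corollary follows, which is exactly the $\L^1$--$\L^\infty$ duality step you carry out explicitly with the test function $f=\mathrm{sgn}(\mathfrak{p}-\mathfrak{p}_k)$. Your write-up merely makes explicit the detail the paper leaves implicit.
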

\begin{proof}{ of Theorem~\ref{theo-main}} 
We apply \cite[Prop.~2.1(b)]{HerLed14} based on the Keller-Liverani perturbation theorem \cite{KelLiv99}. Define $(\cB_1,\|\cdot\|_1)$ as the weighted-supremum Banach space 
$$\cB_1 := \big\{ \ f : \X\r\C, \text{ measurable }: \|f\|_1  := \sup_{x\in\X} |f(x)| V(x)^{-1} < \infty\ \big\}.$$ 
Note that Inequality~(\ref{V-geo}) writes as follows
$$\forall n\geq 0,\ \forall f\in\cB_1,\quad  \|P^nf - \pi(f)\, 1_\X\|_1 \leq  C\, \rho^{n}\, \|f\|_1.$$ 
Since $p_k(x,y) \leq p(x,y)$, $\Pc_k$ continuously acts on both $\cB_0$ and $\cB_1$. In fact $\Pc_k$ is finite-rank, more precisely 
$$\Pc_k(\cB_1) \subset \cF_k$$
with $\cF_k$ given in Section~\ref{sect-disc} (see (\ref{Vk})). Note that $\widehat\pi_k$ clearly defines a non-negative bounded linear form on $\cB_1$. Then, according to \cite[Prop.~2.1(b)]{HerLed14}, Property~(\ref{estim-VT}) follows from the next Lemmas~\ref{lem-drift} and \ref{lem-appro}. 
\end{proof}
\begin{alem} \label{lem-drift}
We have 
$$\forall k\geq 1,\quad \Pc_kV \leq \delta V + L \qquad \text{with $\ L:=M+1$ and $M$ given in (\ref{drift})}.$$
\end{alem}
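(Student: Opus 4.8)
The plan is to read off $\Pc_k V$ directly from the definition (\ref{def-hatPk}) and then compare it term-by-term with the drift bound (\ref{drift}) already available for $P$. The only structural facts I will use are the pointwise inequality $p_k \leq p$ and the bound $0 \leq \psi_k \leq 1_\X$, both recorded in Section~\ref{sect-disc}, together with one elementary normalization of $V$ at the base point $x_0$.

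First I would apply (\ref{def-hatPk}) with $f = V$ to get
$$\Pc_k V = \Qc_k V + V(x_0)\, \psi_k.$$
The crucial elementary observation is that $V(x_0) = v\big(d(x_0,x_0)\big) = v(0) = 1$ by the definition (\ref{v-V}) of $V$ and the normalization $v(0)=1$. Hence the rank-one correction term in the definition of $\Pc_k$ collapses to $\psi_k$, which is bounded above by the constant $1$ since $0 \leq \psi_k \leq 1_\X$.

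Next I would control the main term $\Qc_k V$. Because $p_k \leq p$ and $V \geq 0$, for every $x\in\X$ one has $(\Qc_k V)(x) = \int_\X V(y)\, p_k(x,y)\, d\mu(y) \leq \int_\X V(y)\, p(x,y)\, d\mu(y) = (PV)(x)$, so that $\Qc_k V \leq PV$. Inserting the drift inequality (\ref{drift}) gives $\Qc_k V \leq PV \leq \delta V + M$.

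Combining the two estimates yields $\Pc_k V \leq \delta V + M + \psi_k \leq \delta V + M + 1 = \delta V + L$ with $L := M+1$, and this bound is uniform in $k$ because $\delta$ and $M$ depend on $P$ alone while the inequalities $p_k \leq p$ and $\psi_k \leq 1_\X$ hold for every $k$. I do not expect any genuine obstacle: the single point that must be handled with care is the identification $V(x_0)=1$, since it is precisely this normalization that makes the extra term $f(x_0)\psi_k$ harmless and allows the uniform drift constant $L=M+1$ to be only a $+1$ perturbation of the constant $M$ coming from $P$.
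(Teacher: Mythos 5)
Your proof is correct and uses exactly the same ingredients as the paper's: $V(x_0)=v(0)=1$ so the correction term is $\psi_k\leq 1$, and $p_k\leq p$ gives $\Qc_k V\leq PV\leq \delta V+M$. The paper merely phrases this with a case split on $x\in\X_k$ versus $x\in\X\setminus\X_k$, which your unified argument renders unnecessary.
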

\begin{proof}{}
If $x\in \X\setminus\X_k$, then $(\Pc_k V)(x)= V(x_0)\, \psi_k(x)\leq1$. If $x\in\X_k$, then we obtain (see (\ref{def-mik-f})): 
\begin{eqnarray*}
(\Pc_k V)(x) &=& \sum_{i\in I_k} m_{i,k}(V)\, 1_{\X_{i,k}}(x) + V(x_0)\, \psi_k(x)\\
&\leq& \sum_{i\in I_k} \bigg(\int_{\X} V(y)\, p(x,y)\, d\mu(y)\bigg)1_{\X_{i,k}}(x) + 1 \\
&\leq&  (PV)(x) + 1. 
\end{eqnarray*}
The desired inequality follows from (\ref{drift}). 
\end{proof}

\begin{alem} \label{lem-appro}
For every $k\geq 1$ we have: $\displaystyle 
\sup_{f\in{\cal B}_0,\, \|f\|_0\leq 1} \|\Pc_kf- Pf\|_1 \ \leq\, \tau_k$.
\end{alem}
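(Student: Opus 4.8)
The plan is to estimate $\|\Pc_k f-Pf\|_1=\sup_{x\in\X}|(\Pc_k f)(x)-(Pf)(x)|\,V(x)^{-1}$ by splitting $\X$ into the two regions $\X\setminus\X_k$ and $\X_k$ and showing that on each the supremum is at most $\tau_k$. First I would dispose of the complement $\X\setminus\X_k$, where the estimate is immediate: since $\Qc_k f$ vanishes off $\X_k$ and $\psi_k\equiv 1$ there, formula (\ref{def-hatPk}) gives $(\Pc_k f)(x)=f(x_0)$, while $|(Pf)(x)|\leq\|f\|_0$ because $P(x,\cdot)$ is a probability measure; hence $|(\Pc_k f)(x)-(Pf)(x)|\leq 2\|f\|_0$. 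As $d(x,x_0)\geq k$ forces $V(x)=v(d(x,x_0))\geq v(k)$ by monotonicity of $v$, this region contributes at most $2\|f\|_0/v(k)\leq 2/v(k)$ once $\|f\|_0\leq 1$.

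The heart of the argument is the region $\X_k$. Fixing $x\in\X_{i,k}$ and writing $q_i(y):=\inf_{t\in\X_{i,k}}p(t,y)$, formula (\ref{dec-Pkf}) gives $(\Qc_k f)(x)=\int_{\X_k}f(y)\,q_i(y)\,d\mu(y)$ and $\psi_k(x)=1-\int_{\X_k}q_i(y)\,d\mu(y)$. The key manipulation I would carry out is to use the normalization $\int_\X p(x,y)\,d\mu(y)=1$ to recast the full difference in terms of the increments $f(y)-f(x_0)$: splitting $Pf$ over $\X_k$ and $\X\setminus\X_k$ and inserting $\psi_k(x)=\int_{\X_k}(p(x,y)-q_i(y))\,d\mu(y)+P(x,\X\setminus\X_k)$, a routine rearrangement should yield
$$(\Pc_k f)(x)-(Pf)(x)=\int_{\X_k}\big(f(y)-f(x_0)\big)\big(q_i(y)-p(x,y)\big)\,d\mu(y)+\int_{\X\setminus\X_k}\big(f(x_0)-f(y)\big)\,p(x,y)\,d\mu(y).$$
This rearrangement is the step I expect to be the crux: the Dirac-correction term $f(x_0)\,\psi_k(x)$ is \emph{not} individually small, because the mass $\int_{\X_k}q_i(y)\,d\mu(y)$ may be far from $1$ (both through the infimum defining $q_i$ and through the truncation to $\X_k$), so it must be absorbed by telescoping against $f(x_0)$ rather than bounded directly. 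Once this is done, each integrand is a product of a factor bounded by $2\|f\|_0$ and a genuinely small quantity.

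It then remains to estimate the two integrals. For the first, the sign is favorable: $q_i(y)\leq p(x,y)$ for $x\in\X_{i,k}$, so $0\leq p(x,y)-q_i(y)=\sup_{t\in\X_{i,k}}\big(p(x,y)-p(t,y)\big)\leq L_{i,k,\theta}(y)\,\text{diam}(\X_{i,k})^\theta\leq L_{i,k,\theta}(y)\,\delta_k^\theta$ by (\ref{L-i-k-theta}) and (\ref{cond-diam}); integrating and using $\ell_{k,\theta}=\max_{i\in I_k}\int_{\X_k}L_{i,k,\theta}(y)\,d\mu(y)$ bounds the first integral by $2\|f\|_0\,\ell_{k,\theta}\,\delta_k^\theta$. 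For the second, $|f(x_0)-f(y)|\leq 2\|f\|_0$ combined with $P(x,\X\setminus\X_k)\leq\alpha_k\,V(x)$ from (\ref{def-alpha}) gives at most $2\|f\|_0\,\alpha_k\,V(x)$. Dividing by $V(x)\geq 1$ and adding, the region $\X_k$ contributes at most $2\|f\|_0\big(\alpha_k+\ell_{k,\theta}\,\delta_k^\theta\big)\leq 2\big(\alpha_k+\ell_{k,\theta}\,\delta_k^\theta\big)$ for $\|f\|_0\leq 1$. Taking the maximum of the two regional bounds reproduces exactly $\tau_k=2\max\big(v(k)^{-1},\,\alpha_k+\ell_{k,\theta}\,\delta_k^\theta\big)$, which is the claimed inequality.
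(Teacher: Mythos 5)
Your proof is correct, and it follows the paper's overall scheme: the same split of $\X$ into $\X\setminus\X_k$ (where the crude bound $2/v(k)$ suffices) and $\X_k$ (where $\alpha_k$ from (\ref{def-alpha}) controls the truncation and $L_{i,k,\theta}(y)\,\delta_k^\theta$ from (\ref{L-i-k-theta}) and (\ref{cond-diam}) controls the discretization), ending with the same constant $2(\alpha_k+\ell_{k,\theta}\,\delta_k^\theta)$. The one place you diverge is the treatment of the correction term $f(x_0)\,\psi_k(x)$: you absorb it through an exact telescoping identity against $f(y)-f(x_0)$, whereas the paper simply writes $|\Pc_kf-Pf|\leq \psi_k\,|f(x_0)|+|\Qc_kf-Pf|$ and bounds $\psi_k(x)/V(x)$ \emph{directly} by $\alpha_k+\ell_{k,\theta}\,\delta_k^\theta$, observing that $\psi_k = P1_\X-\Qc_k1_\X$ so that the estimate already proved for $|\Qc_kf-Pf|/V$ applies with $f=1_\X$ (this is the paper's inequality (\ref{ineq-cont-2})). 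Both routes yield the factor $2$, yours from $|f(y)-f(x_0)|\leq 2\|f\|_0$ and the paper's from adding two copies of the same bound. I would only correct your claim that the term $f(x_0)\,\psi_k(x)$ ``must be absorbed by telescoping rather than bounded directly'': while $\psi_k(x)$ need not be small in absolute terms, it \emph{is} small relative to $V(x)$, which is exactly what the weighted norm $\|\cdot\|_1$ requires, so the direct bound works. Your telescoping identity is a clean alternative (and makes the cancellation structure explicit), but it is not forced.
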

\begin{proof}{} 
Let $f\in{\cal B}_0$, $\|f\|_0\leq 1$. If $x\in \X\setminus\X_k$, it follows from $(\Pc_k f)(x)= f(x_0)\, \psi_k(x)$ that 
\begin{equation} \label{ineg-1}
\frac{\big|(\Pc_kf)(x)  - (Pf)(x)\big|}{V(x)} \leq \frac{\psi_k(x)+ (P|f|)(x)}{V(x)} \leq \frac{2}{V(x)} \leq \frac{2}{v(k)}.
\end{equation}
Next assume that $x\in\X_k$. Then we obtain from the definition of $\Qc_k$ that 
\begin{eqnarray*}
\frac{\big|(\Qc_kf)(x)  - (Pf)(x)\big|}{V(x)} &\leq& \int_\X \frac{\big|p_k(x,y) - p(x,y) \big|}{V(x)}\, d\mu(y)  \nonumber \\
&\leq&  \underbrace{\int_{\X\setminus\X_k} \frac{\big|p_k(x,y) - p(x,y) \big|}{V(x)}\, d\mu(y)}_{:= \alpha_k(x)} + \underbrace{\int_{\X_k} \frac{\big|p_k(x,y) - p(x,y) \big|}{V(x)}\, d\mu(y)}_{:= \beta_k(x)} 
\end{eqnarray*}
Since $p_k(x,y)=0$ when $y\in\X\setminus\X_k$, we obtain that 
$$\alpha_k(x) = \int_{\X\setminus\X_k} \frac{p(x,y)}{V(x)}\, d\mu(y) = \frac{P\big(x,\X\setminus\X_k\big)}{V(x)} \leq \alpha_k$$
from the definition (\ref{def-alpha}) of $\alpha_k$. Now, since $V\geq 1$, it follows from Conditions~(\ref{cond-diam}) and (\ref{Lk-finite})  that  

\begin{eqnarray*}
\beta_k(x) &=& \int_{\X_k} \bigg|\sum_{i\in I_k} 1_{\X_{i,k}}(x) \inf_{t\in \X_{i,k}} p(t,y) - \sum_{i\in I_k}1_{\X_{i,k}}(x)\, p(x,y) \bigg|\, d\mu(y) \\
&\leq& \int_{\X_k} \sum_{i\in I_k} 1_{\X_{i,k}}(x)\, \big|p(x,y) - \inf_{t\in \X_{i,k}} p(t,y)\big| \, d\mu(y) \\
&\leq& \int_{\X_k} \sum_{i\in I_k} 1_{\X_{i,k}}(x) \sup_{u\in \X_{i,k}} \big|p(x,y) - p(u,y)\big| \, d\mu(y) \\
&\leq&  \delta_k^{\theta}\int_{\X_k} \sum_{i\in I_k} 1_{\X_{i,k}}(x)\,  L_{i,k,\theta}(y) \, d\mu(y) \\
&\leq&  \delta_k^{\theta} \sum_{i\in I_k} 1_{\X_{i,k}}(x)\int_{\X_k} \,  L_{i,k,\theta}(y) \, d\mu(y) \\
&\leq& \delta_k^{\theta} \, \ell_{k,\theta}. 
\end{eqnarray*}
We have proved that, for every $f\in{\cal B}_0$ such that $\|f\|_0\leq 1$ and for every $x\in\X_k$, we have 
\begin{equation} \label{ineq-cont-1}
\frac{\big|(\Qc_kf)(x)  - (Pf)(x)\big|}{V(x)} \leq \alpha_k +  \ell_{k,\theta} \, \delta_k^{\theta}.
\end{equation}
Moreover we deduce from the definition of $\psi_k$ and from (\ref{ineq-cont-1}) that  
\begin{equation} \label{ineq-cont-2}
0\leq \frac{\psi_k(x)}{V(x)} = \frac{1 - (\Qc_k1_{\X})(x)}{V(x)} = \frac{(P1_{\X})(x) - (\Qc_k1_{\X})(x)}{V(x)} \leq \alpha_k +  \ell_{k,\theta} \, \delta_k^{\theta}.
\end{equation}
It follows from Inequalities (\ref{ineq-cont-1}) and (\ref{ineq-cont-2}) that, for every $f\in{\cal B}_0$ such that $\|f\|_0\leq 1$ and for every $x\in\X_k$, we have: 
\begin{eqnarray*}
\frac{\big|(\Pc_kf)(x)  - (Pf)(x)\big|}{V(x)} 
&=&  \frac{\big|(\Qc_kf)(x) +f(x_0)\psi_k(x)  - (Pf)(x)\big|}{V(x)}  \\
&\leq& \frac{\psi_k(x)}{V(x)} + \frac{\big|(\Qc_kf)(x)  - (Pf)(x)\big|}{V(x)} \\
&\leq& 2\big(\alpha_k +  \ell_{k,\theta} \, \delta_k^{\theta}\big).
\end{eqnarray*}
This inequality and (\ref{ineg-1}) provide the conclusion of Lemma~\ref{lem-appro}. 
\end{proof}
\begin{arem} \label{rem-cste}
The inequality~$(b)$ of \cite[Prop.~2.1]{HerLed14} provides explicit bounds in (\ref{estim-VT}) and (\ref{estim-L1}) in terms of the constants $\delta$, $L$ in (\ref{drift}) and the constants $C$ and $\rho$ in (\ref {V-geo}). Unfortunately, finding explicit constants $\rho\in(0,1)$ and $C>0$ in (\ref{V-geo}) is a difficult issue, even for simple models as AR(1). Such constants can be obtained in our context by applying the procedure of \cite[Th.~4.1]{HerLed14}, but the resulting constant $C$  is too large to be numerically interesting. An alternative way is to use, for $k$ larger and larger, the bound provided by Inequality $(a)$ of \cite[Prop.~2.1(a)]{HerLed14}, which is only based on the spectral properties of the finite stochastic matrix $B_k$. But again the resulting constants are too large. In fact the numerical applications  presented in Section~\ref{sec-examp} show that the convergence in (\ref{estim-VT}) and (\ref{estim-L1}) is much better than what is provided by using the constants derived from \cite[Prop.~2.1]{HerLed14}. 
\end{arem}
%
\section{Applications to first order autoregressive processes} \label{sec-AR}

\subsection{The standard AR(1) process} \label{sub_AR1}
Let $(X_n)_{n\in\N}$ be a standard first order 
linear autoregressive process, that is 
\begin{equation} \label{ar1}
\forall n\geq1,\quad X_n = \varrho\, X_{n-1} + \vartheta_n,
\end{equation}
where $X_0$ is a real-valued random variable (r.v.) and $(\vartheta_n)_{n\in\N}$ is a sequence of real-valued independent and identically distributed (i.i.d.) random variables, also assumed to be independent from $X_0$. We suppose that $|\varrho|<1$, that $\vartheta_1$ has a pdf $\nu$, called the innovation density function, with respect to the Lebesgue measure $d\mu(y) := dy$ on $\R$. Assume that the three following conditions are satisfied: 
\vspace*{-3mm}
\begin{itemize}
	\item[(a)]  $\vartheta_1$ has a moment of order $m$ for some $m\in[1,+\infty)$, namely  
\begin{equation} \label{m}
\exists\, m\in[1,+\infty),\quad \eta_m := \int_\R |x|^m\, \nu(x)\, dx < \infty;
\end{equation}
\item[(b)] $\nu$ is continuously differentiable on $\R$ and its derivative $\nu'$ is assumed to be right differentiable on $\R$; 
\item [(c)] finally
\begin{equation} \label{lip}
   \ I':=\int_\R|\nu'(y)|\, dy < \infty \quad \text{and} \quad M'':=\sup_{t\in\R} |\nu_r''(t)| < \infty
\end{equation} 
where $\nu''_r(t)$ denotes the right derivative of $\nu'$ at $t$. 
\end{itemize}

Let $\X:=\R$ be equipped with its usual distance $d(x,x'):=|x-x'|$ and with its  Borel $\sigma$-algebra $\cX$. Recall that $(X_n)_{n\in\N}$ is a Markov chain with transition kernel $P$ defined by 
\begin{equation} \label{AR-kernel} 
\forall A\in\cX,\quad P(x,A) := \int_\R 1_A(y)\, p(x,y)\, dy \qquad \text{with }\ p(x,y) := \nu(y-\varrho x).
\end{equation} 
It is well-known from \cite{MeyTwe93} that $(X_n)_{n\in\N}$ admits a unique stationary probability measure $\pi$ on $\R$, and that $\pi$ is absolutely continuous with respect to the Lebesgue measure, with density function $\mathfrak{p}$ such that $\int_\R |y|^m\mathfrak{p}(y) dy <\infty$ and satisfying  
$$\forall x\in\R,\quad \mathfrak{p}(x) = \int_\R \nu(x-\varrho u)\, \mathfrak{p}(u)\, du.$$
For $x\in\R$, we define $V(x) = \lfloor 1+ |x|^m\rfloor$ where $m$ is the positive real number given in (\ref{m}) and where $\lfloor\cdot\rfloor$ denotes the integer part function on $\R$. According to (\ref{m}) and $|\varrho| < 1$, $P$ is $V$-geometrically ergodic (see \cite{MeyTwe93}). In the sequel we fix 
any $\delta\in (|\varrho|^m,1)$. It can be easily deduced from (\ref{m}) that there exists $M\equiv M(\delta)$ such that 
\begin{equation} \label{drift-AR}
PV \leq \delta V + M.
\end{equation}
For every $k \geq 1$, we choose $\delta_k>0$ such that $\delta_k = \text{O}(1/k)$ and (for the sake of simplicity) such that $q_k := 2k/\delta_k\in\N$. Set 
 $\ \X_k=[-k,k[$, and consider the following partition of $\X_k$: 
\begin{equation} \label{X-i-k-ar}
\X_k := 
\bigsqcup_{i=0}^{q_k-1} \X_{i,k}\qquad \text{with }\ 
\X_{i,k} := \big[x_{i,k},x_{i+1,k}\big[,\quad x_{i,k} = -k + i\, \delta_k.
\end{equation}
The associated discretized Markov kernels $\Pc_k$ and the  probability measures $\widehat\pi_k$ on $\R$ are defined by (\ref{def-hatPk}) and (\ref{def-hat-pik}) respectively. The associated function 
$\mathfrak{p}_k$ is given in (\ref{def-density-pk}).  
\begin{apro} \label{rate-ar}
Let $ \delta_k$ be such that $\delta_k>0$ and $\delta_k = \text{O}(1/k)$. Assume that the innovation density function $\nu(\cdot)$ satisfies Conditions~(\ref{m}) and (\ref{lip}). 
Then
\begin{equation} \label{erreur-AR}  
\|\pi-\widehat\pi_k\|_{TV} = 
\text{O}\big(|\ln \tau_k|\, \tau_k\big), \quad  \|\mathfrak{p} - \mathfrak{p}_k\|_{\L^1(\R)} = \text{O}\big(|\ln \tau_k|\, \tau_k\big) \quad 
\text{with} \ \tau_k =  \frac{1}{k^{m}}+\delta_k.
\end{equation}
\end{apro}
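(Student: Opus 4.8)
The plan is to obtain Proposition~\ref{rate-ar} as a direct application of Theorem~\ref{theo-main} and Corollary~\ref{cor-main} to the AR(1) kernel $p(x,y) = \nu(y - \varrho x)$. Since the drift inequality (\ref{drift}) is already recorded in (\ref{drift-AR}), the whole task reduces to verifying the two remaining hypotheses (\ref{def-alpha}) and (\ref{Lk-finite}) for the partition (\ref{X-i-k-ar}), and then checking that the quantity $\tau_k$ produced by the theorem is $\text{O}(1/k^m + \delta_k)$. For the latter, note that with $V(x) = \lfloor 1 + |x|^m\rfloor$ one has $x_0 = 0$ and $v(k) \geq k^m$, so $1/v(k) \leq 1/k^m$; hence the $1/v(k)$ contribution to $\tau_k$ in (\ref{estim-VT}) is absorbed into the claimed $1/k^m$ term. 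The $\L^1$ bound will then come for free from Corollary~\ref{cor-main}, since $\mu(\{x_0\}) = \mu(\{0\}) = 0$ for Lebesgue measure.

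To handle (\ref{def-alpha}), I would write, for $u \in \X_k = [-k, k[$,
$$P(u, \X \setminus \X_k) = \P(\vartheta_1 \geq k - \varrho u) + \P(\vartheta_1 < -k - \varrho u) \leq \P\big(|\vartheta_1| \geq (1 - |\varrho|)k\big),$$
the last inequality using $|\varrho u| \leq |\varrho| k$ so that both thresholds have absolute value at least $(1-|\varrho|)k$. Markov's inequality applied with the moment condition (\ref{m}) then gives $P(u, \X \setminus \X_k) \leq \eta_m (1 - |\varrho|)^{-m} k^{-m}$, and since $V \geq 1$ this yields $\alpha_k = \text{O}(1/k^m)$.

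The main work — and the step I expect to be the real obstacle — is the estimate of $\ell_{k,\theta}$ in (\ref{Lk-finite}), which I would carry out with $\theta = 1$. By the mean value theorem applied to $t\mapsto\nu(y-\varrho t)$, the quantity $L_{i,k,1}(y)$ of (\ref{L-i-k-theta}) satisfies $L_{i,k,1}(y) \leq |\varrho| \sup_{t \in \X_{i,k}} |\nu'(y - \varrho t)|$. The difficulty is that this supremum inside the integral must be replaced by an integrable majorant, uniformly in $i\in I_k$. Here the two parts of Condition~(\ref{lip}) enter: fixing $t_i \in \X_{i,k}$ and using that $\nu'$ is $M''$-Lipschitz (from $\sup_t|\nu_r''(t)| = M'' < \infty$) over the interval $y - \varrho\,\X_{i,k}$, whose length is $|\varrho|\,\text{diam}(\X_{i,k}) \leq |\varrho|\delta_k$, one gets
$$\sup_{t \in \X_{i,k}} |\nu'(y - \varrho t)| \leq |\nu'(y - \varrho t_i)| + |\varrho| M'' \delta_k.$$
Integrating over $\X_k$ and using $\int_\R |\nu'(y - \varrho t_i)|\,dy = I' < \infty$ together with $|\X_k| = 2k$ gives $\int_{\X_k} L_{i,k,1}(y)\,dy \leq |\varrho| I' + 2|\varrho|^2 M'' k\delta_k$. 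Because $\delta_k = \text{O}(1/k)$, the product $k\delta_k$ stays bounded, so $\ell_{k,1} = \text{O}(1)$; in particular $\ell_{k,1} < \infty$ and $\ell_{k,1}\delta_k = \text{O}(\delta_k) \to 0$, which is exactly what (\ref{Lk-finite}) requires.

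Combining the two estimates, $\alpha_k + \ell_{k,1}\delta_k = \text{O}(1/k^m + \delta_k)$, so the $\tau_k$ of Theorem~\ref{theo-main} satisfies $\tau_k = \text{O}(1/k^m + \delta_k)$. Finally, since $\tau_k \to 0$ and the map $t \mapsto t\,|\ln t|$ is increasing near $0$, the theorem's bound $\text{O}(|\ln \tau_k|\,\tau_k)$ transfers to the claimed rate with $\tau_k = 1/k^m + \delta_k$ (an extra multiplicative constant inside the logarithm is asymptotically negligible against $|\ln \tau_k|$), and Corollary~\ref{cor-main} delivers the companion $\L^1$ estimate. The only subtlety to double-check is the interchange of supremum and integral in the $\ell_{k,1}$ bound and the uniformity in $i \in I_k$, both of which the Lipschitz control of $\nu'$ makes routine.
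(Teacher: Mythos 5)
Your proposal is correct and follows essentially the same route as the paper's proof: verify (\ref{def-alpha}) via Markov's inequality using $|\varrho u|\leq |\varrho| k$, and verify (\ref{Lk-finite}) with $\theta=1$ by the mean value theorem plus the Lipschitz control of $\nu'$ coming from $M''$, arriving at the same bound $\ell_{k,1}\leq |\varrho| I' + 2|\varrho|^2 M'' k\delta_k = \text{O}(1)$. The only cosmetic difference is that you spell out the absorption of the $1/v(k)$ term and the monotonicity of $t\mapsto t|\ln t|$, which the paper leaves implicit.
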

\begin{proof}{}
Proposition~\ref{rate-ar} follows from Theorem~\ref{theo-main} and Corollary~\ref{cor-main}, provided that Assumptions~(\ref{def-alpha}) and (\ref{Lk-finite}) are satisfied (all the others assumptions of Theorem~\ref{theo-main} have been already checked above). First the real number $\alpha_k$ in (\ref{def-alpha}) satisfies 
$$\alpha_k =  
\int_{|y| > k(1-|\varrho|)} \nu(y)\, dy \ \leq \frac{\eta_m}{(1-|\varrho|)^m\, k^m}$$
from Markov's inequality. Thus (\ref{def-alpha}) holds. Second, we obtain for every $(x,x')\in\X_{i,k}$
\begin{eqnarray*}
|p(x,y) - p(x',y)| &=& \big| \nu(y-\varrho x) - \nu(y-\varrho x')\big| \\
 &\leq & 
  |\varrho|\, |x-x'|\, \big| \nu'(y - \varrho \, c) \big| \quad \text{for some }\ c\equiv c_{x,x',y}\in\X_{i,k} \\
&\leq & 
 |\varrho|\, |x-x'|\,  \left(\big| \nu'(y - \varrho \, c) - 
\nu'(y - \varrho \, x_{i,k}) \big| + \big|\nu'(y - \varrho \, x_{i,k})\big|
 \right)  \\
 &\leq &  
|\varrho|\, |x-x'|\, \left(  
|\varrho|\, M''\,  \delta_k 
 + \left|\nu'(y - \varrho \,  x_{i,k})\right|\right)
\end{eqnarray*}
so that 
\begin{eqnarray*}
L_{i,k,1}(y) &:=& \sup\left\{\frac{|p(x,y) - p(x',y)|}{|x-x'|},\ (x,x')\in\X_{i,k}\times\X_{i,k},\ x\neq x'\right\} \\
&\leq& 
|\varrho|\left(|\varrho|\, M''\,  \delta_k  + \left|\nu'(y - \varrho \, x_{i,k})\right|\right). 
 \end{eqnarray*}
Using the notations of (\ref{lip}), we obtain that 
\begin{equation} \label{ell-k-1}
\ell_{k,1} := \max_{i\in I_k} 
\int_{-k}^k L_{i,k,\theta}(y) \, dy \leq2\, |\varrho|^2\, M''
 k\, \delta_k  + |\varrho|\, I'. 
\end{equation}
 Recall that $\delta_k = \text{O}(1/k)$ by hypothesis, so that $\sup_{k\geq 1} \ell_{k,1} < \infty$.
Hence (\ref{Lk-finite}) holds. 
\end{proof}
\begin{arem} 
Alternative assumptions (instead of (\ref{lip})) on the innovation density function $\nu$ are possible. For instance, in place of (\ref{lip}), we may suppose that the derivative $\nu'$ of $\nu$ exists and that $D:=\sup_{x\in\R} |\nu'(x)| < \infty$. Then $L_{i,k,1}(\cdot) \leq D$, so that $\ell_{k,1} = \text{O}(k)$. Thus, provided that $\delta_k = \text{o}(1/k)$, the statements of Proposition~\ref{rate-ar} is replaced with the following ones: 
$\|\pi-\widehat\pi_k\|_{TV}$ and $\|\mathfrak{p} - \mathfrak{p}_k\|_{\L^1(\R)}$ are both 
 $\text{O}\big(|\ln \tau_k|\, \tau_k\big)$ with $\tau_k = 1/k^m + k\, \delta_k$ and both $\|\pi-\widehat\pi_k\|_{TV}$ and $\|\mathfrak{p} - \mathfrak{p}_k\|_{\L^1(\R)}$ converge to $0$ when $k\r+\infty$ .
\end{arem}
\begin{arem}
 In \cite{DonGooMidRae00} a similar state-discretization procedure is proposed to estimate the spectrum of the Markov kernel $P$ given in (\ref{AR-kernel}). Because the authors of \cite{DonGooMidRae00} use the standard perturbation theory, they have to assume that the innovation density function is compactly supported in some interval $[a,b]$ (the action of $P$ is then considered on the usual Lebesgue space $\L^2([a,b])$.  The use of the Keller-Liverani perturbation theorem in our work (see the proof of Theorem~\ref{theo-main}) allows us to consider innovation density functions with unbounded support. 
\end{arem}
\begin{arem} \label{moins-reg}
If $(X_n)_{n\in\N}$ is a first order autoregressive model given by (\ref{ar1}), then for any $\ell\geq1$ the sequence $(X^{(\ell)}_n)_{n\geq0}$ defined by $X^{(\ell)}_n := X_{\ell n}$ satisfies the following linear recursion 
\begin{equation} \label{AR_itere}
\forall n\geq1,\quad X^{(\ell)}_n = \varrho^\ell\, X^{(\ell)}_{n-1} + \vartheta^{\, (\ell)}_n \quad \text{with } \vartheta^{\, (\ell)}_n = \sum_{k\, =\, \ell(n-1)+1}^{\ell\, n} \varrho^{\ell n-k}\, \vartheta_k.
\end{equation}
The sequence $(\vartheta^{\, (\ell)}_n)_{n\geq0}$ is i.i.d., and  $(X^{(\ell)}_n)_{n\geq0}$ is a first order autoregressive model having the same stationary density function as  $(X_n)_{n\in\N}$. The transition kernel of $(X^{(\ell)}_n)_{n\geq0}$ is $P^\ell$, which is of the form (\ref{form-P}) too. More precisely, for every $x\in\R$ we have  $P^{\ell}(x,dy) = p_\ell(x,y)\, dy$ with $p_\ell(x,y) := \nu_\ell(y-\varrho^\ell x)$, where 
$\nu_\ell$ is the pdf of $\vartheta^{\, (\ell)}_1$, that is
$\nu_\ell := \mu_{\ell}\star\cdots\star\mu_{1}$, where $\mu_k$ denotes the pdf of the r.v.~$\varrho^{\ell-k}\, \vartheta_k$ for $k=1,\ldots,\ell$, and where the symbol "$\star$" stands for the standard convolution product. This fact may be relevant since $\nu_\ell$ is more and more regular as $\ell$ increases, so that  $\nu_\ell$ may satisfy the regularity condition required in (\ref{lip}) for $\ell$ large enough. In this case the stationary density function of $(X_n)_{n\in\N}$ can be approximated by applying Proposition~\ref{rate-ar} to $P^\ell$ (thus with $\nu_\ell$ in place of $\nu$). 
For instance, if the innovation law is the uniform distribution on $[0,1]$, then Proposition~\ref{rate-ar} applies to the Markov kernel $P^3$ since the associated innovation density function (i.e.~the law of $\vartheta^{\, (3)}_1 := \varrho^{2}\, \vartheta_1 + \varrho\, \vartheta_2 +  \vartheta_3$) is continuously differentiable on $\R$ and satisfies~(\ref{lip}). 
\end{arem}
\subsection{The AR(1) process with ARCH$(1)$ errors} \label{sec-arch} 
The following example is derived from \cite{BorKlu01}. Let $\X:=\R$ be equipped with its usual distance $d(x,x'):=|x-x'|$ and with its  Borel $\sigma$-algebra $\cX$. Let $\alpha\in\R$ and let $\beta,\lambda>0$. We consider the autoregressive process $(X_n)_{n\in\N}$ with ARCH$(1)$ errors, defined by  
\begin{equation} \label{arch}
\forall n\geq1,\quad X_n = \alpha\, X_{n-1} + \big(\beta+\lambda X_{n-1}^2\big)^{1/2}\, \vartheta_n,
\end{equation}
where $X_0$ is a real-valued r.v.~and $(\vartheta_n)_{n\in\N}$ is a sequence of  i.i.d. real-valued random variables which are independent from $X_0$. We suppose that $\vartheta_1$ has a pdf $\nu$ with respect to the Lebesgue measure $d\mu(y) := dy$ on $\R$, that $\nu$ is a bounded continuously differentiable and symmetric function with full support $\R$, that its derivatives $\nu'$ satisfies $|\nu'(x)| = \text{O}_{\pm\infty}(1/|x|)$, and finally that $\vartheta_1$ has a second-order moment. Then $(X_n)_{n\in\N}$ is a Markov chain with transition kernel $P$ defined by $P(x,A) := \int_\R 1_A(y)\, p(x,y)\, dy\ $ ($A\in\cX$) with 
\begin{equation} \label{AR-arch} 
p(x,y) := \big(\beta+\lambda x^2\big)^{-1/2}\, \nu\left(\frac{y-\alpha x}{\big(\beta+\lambda x^2\big)^{1/2}}\right).
\end{equation} 
As in Section~\ref{sec-AR}, for every $k\geq 1$ we consider $q_k:=2k/\delta_k$, $\ \X_k=[-k,k[$, and $\X_{i,k}$ as in (\ref{X-i-k-ar}).  
Moreover assume that  
\begin{equation} \label{arch-cond-para} 
\E\big[\ln\big|\alpha + \sqrt{\lambda} \, \vartheta_1\big|\, \big] < 0.
\end{equation} 
Then there exists $\kappa > 0$ such that, for every $u\in]0,\kappa[$, we have  
$\E[|\alpha + \sqrt{\lambda} \, \vartheta_1|^u] < 1$, see \cite[Prop.~2]{BorKlu01}. Let $\eta\in ]0,\min(\kappa,2)[$.
\begin{apro} \label{pro-arch}
Under the previous assumptions and notations, the following estimates hold true:
\begin{equation} \label{erreur-arch}  
\|\pi-\widehat\pi_k\|_{TV} = 
\text{O}\big(|\ln \tau_k|\, \tau_k\big), \quad  \|\mathfrak{p} - \mathfrak{p}_k\|_{\L^1(\R)} = \text{O}\big(|\ln \tau_k|\, \tau_k\big) \quad 
\text{with} \ \tau_k =  
\frac{1}{k^{\eta/2}}
+ k\, \delta_k. 
\end{equation}
Thus $\|\pi-\widehat\pi_k\|_{TV}$ and $\|\mathfrak{p} - \mathfrak{p}_k\|_{\L^1(\R)}$ converge to $0$ when $k\r+\infty$ provided that $\delta_k = \text{o}(1/k)$.
\end{apro}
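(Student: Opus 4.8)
The plan is to follow exactly the scheme used for Proposition~\ref{rate-ar}: fix the Lyapunov function, recall that $P$ is $V$-geometrically ergodic together with the drift inequality~(\ref{drift}), then check the two technical assumptions~(\ref{def-alpha}) and~(\ref{Lk-finite}), and finally invoke Theorem~\ref{theo-main} and Corollary~\ref{cor-main}. Here $x_0=0$, so $d(x,0)=|x|$, and the natural choice is $V(x):=\lfloor 1+|x|^{\eta}\rfloor$ with $\eta\in\,]0,\min(\kappa,2)[$. Since $\nu$ is symmetric, for $x>0$ (resp.\ $x<0$) the dominant behaviour of $\alpha x+(\beta+\lambda x^2)^{1/2}\vartheta_1$ is $x(\alpha+\sqrt{\lambda}\,\vartheta_1)$ (resp.\ $x(\alpha-\sqrt{\lambda}\,\vartheta_1)$, which has the same law), so that $\E[|\alpha+\sqrt{\lambda}\,\vartheta_1|^{\eta}]<1$ yields a contraction $\E[|X_1|^{\eta}\mid X_0=x]\le\delta\,|x|^{\eta}+\text{O}(1)$ with $\delta<1$; this is precisely the drift~(\ref{drift}) and the associated $V$-geometric ergodicity, which are established in~\cite{BorKlu01}. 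It therefore remains only to estimate $\alpha_k$ and $\ell_{k,1}$.

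For~(\ref{def-alpha}) I would write $\alpha_k=\sup_{u\in\X_k}V(u)^{-1}\,\P(|X_1|\ge k\mid X_0=u)$ and apply Markov's inequality with exponent $\eta$, followed by the bound $\E[|X_1|^{\eta}\mid X_0=u]\le C\,V(u)$ coming from the drift:
\[
\P(|X_1|\ge k\mid X_0=u)\le\frac{\E[|X_1|^{\eta}\mid X_0=u]}{k^{\eta}}\le\frac{C\,V(u)}{k^{\eta}}.
\]
This gives $\alpha_k=\text{O}(k^{-\eta})$, which is in particular $\text{O}(k^{-\eta/2})$ (alternatively one may use the second moment of $\vartheta_1$ together with $\E[\vartheta_1]=0$ to bound $\E[X_1^2\mid X_0=u]\le C(1+u^2)$). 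Combined with $v(k)^{-1}=\text{O}(k^{-\eta})$, this accounts for the first term $k^{-\eta/2}$ of $\tau_k$.

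The main work, and the step I expect to be the genuine obstacle, is the regularity condition~(\ref{Lk-finite}) with $\theta=1$, because in~(\ref{AR-arch}) the variable $x$ enters $p(x,y)$ both through the location $\alpha x$ and through the scale $s(x):=(\beta+\lambda x^2)^{1/2}$. Writing $z:=(y-\alpha x)/s(x)$ and differentiating,
\[
\partial_x p(x,y)=-\frac{\lambda x}{s(x)^{3}}\,\nu(z)-\frac{\alpha}{s(x)^{2}}\,\nu'(z)-\frac{\lambda x\,z}{s(x)^{3}}\,\nu'(z),
\]
so by the mean value theorem $L_{i,k,1}(y)\le\sup_{x\in\X_{i,k}}|\partial_x p(x,y)|$, cf.~(\ref{L-i-k-theta}). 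I would bound $\int_{-k}^{k}\sup_{x\in\X_{i,k}}|\partial_x p(x,y)|\,dy$ uniformly in $i$ by performing, for a representative $x_i\in\X_{i,k}$, the change of variables $y\mapsto z$ (so $dy=s(x_i)\,dz$, with $z$ ranging over an interval of length $2k/s(x_i)$). The first two terms integrate to $\text{O}(1)$ and $\text{O}(\log k)$ using $\int_{\R}\nu=1$ and the local integrability of $\nu'$, while the decisive third term is controlled by the hypothesis $|\nu'(t)|=\text{O}_{\pm\infty}(1/|t|)$, which makes $t\mapsto|t\,\nu'(t)|$ bounded; integrating this bounded function over the $z$-interval of length $\text{O}(k/s(x_i))$ and maximising the prefactor $\lambda|x_i|/s(x_i)^{2}$ over $x_i\in[-k,k]$ yields $\ell_{k,1}=\text{O}(k)$. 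Hence $\ell_{k,1}\,\delta_k=\text{O}(k\,\delta_k)$, finite for each $k$ and tending to $0$ as soon as $\delta_k=\text{o}(1/k)$, which produces the second term $k\,\delta_k$ of $\tau_k$ and the stated convergence condition.

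With~(\ref{drift}), (\ref{def-alpha}) and~(\ref{Lk-finite}) verified and $\tau_k=k^{-\eta/2}+k\,\delta_k$, Theorem~\ref{theo-main} delivers the total variation estimate and Corollary~\ref{cor-main} (using $\mu(\{0\})=0$ for the Lebesgue measure) delivers the $\L^1(\R)$ estimate, which completes the proof. The one point requiring real care is the uniform-in-$i$ control of the third term of $\partial_x p$: the quantities $\int_{\R}|\nu'|$ and $\int_{\R}|t\,\nu'(t)|\,dt$ are \emph{not} assumed finite, so the argument must genuinely exploit the boundedness of the $z$-range induced by $y\in[-k,k]$ and the $\text{O}(1/|t|)$ decay of $\nu'$, rather than integrate over all of $\R$.
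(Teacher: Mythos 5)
Your proposal is correct and follows the paper's overall skeleton (take $V(x)=1+|x|^{\eta}$, quote \cite{BorKlu01} for the $V$-geometric ergodicity and the drift~(\ref{drift}), verify~(\ref{def-alpha}) and~(\ref{Lk-finite}), then invoke Theorem~\ref{theo-main} and Corollary~\ref{cor-main}), but both technical verifications are carried out differently. For~(\ref{def-alpha}) you apply Markov's inequality with exponent $\eta$ and absorb $\E[|X_1|^{\eta}\mid X_0=u]$ into $(PV)(u)\le\delta V(u)+M$, which gives the cleaner and in fact sharper bound $\alpha_k=\text{O}(k^{-\eta})$; the paper instead splits the cases $|x|\le\sqrt{k}$ and $\sqrt{k}\le|x|\le k$, works with the explicit tail $\int_{\phi_k(x)}\nu(t)\,dt$ where $\phi_k(x)=(k-\alpha x)/(\beta+\lambda x^2)^{1/2}$ together with the second moment of $\vartheta_1$, and only obtains $\text{O}(k^{-\eta/2})$ --- either bound suffices for the stated $\tau_k$. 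For~(\ref{Lk-finite}) you compute the same $\partial_x p$ but then change variables $y\mapsto z$ and estimate the three terms separately as $\text{O}(1)+\text{O}(\log k)+\text{O}(k)$; the paper simply observes that each of the three terms is bounded uniformly in $(x,y)$, using the finiteness of $\sup\nu$, $\sup|\nu'|$ and $\sup_t|t\,\nu'(t)|$ (the last two being consequences of continuity of $\nu'$ and $|\nu'(t)|=\text{O}_{\pm\infty}(1/|t|)$), so that $|\partial_x p|\le D$ globally and $\ell_{k,1}\le 2Dk$ in one line. In particular, the point you single out as the ``genuine obstacle'' (non-integrability of $\nu'$ and of $t\,\nu'(t)$ over all of $\R$) is sidestepped entirely by this uniform pointwise bound: since only $\ell_{k,1}=\text{O}(k)$ is needed, one never has to integrate $\nu'$ over an unbounded range. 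Your change-of-variables route reaches the same $\text{O}(k)$, just with more work, and it requires a small repair --- you must control $\sup_{x\in\X_{i,k}}|\partial_x p(x,y)|$ rather than the value at a single representative $x_i$ --- which the global bound by $D$ again handles for free.
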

\begin{proof}{}
For $x\in\R$, we define $V(x) = 1+ |x|^\eta$. The $V$-geometrical ergodicity of $P$, together with Condition (\ref{drift}), are proved in \cite[Th.~1]{BorKlu01}. To study (\ref{def-alpha}), we assume that $\alpha >0$ (similar arguments hold if $\alpha < 0$). Note that 
$$\int_{k}^{+\infty} p(x,y)\, dy = \int_{\phi_k(x)} \nu(t)\, dt\qquad \text{with}\quad \phi_k(x) := \frac{k-\alpha x}{\big(\beta+\lambda x^2\big)^{1/2}}.$$
Let $x\in[-k,k]$. If $\sqrt{k} \leq |x| \leq k$, then 
$$\frac{1}{1+|x|^\eta} 
\int_{k}^{+\infty} p(x,y)\, dy \leq 
\frac{1}{1+k^{\eta/2}}.$$
If $|x| \leq \sqrt{k}$, then $\frac{1}{1+|x|^\eta} \leq 1$ and
$$\int_{\phi_k(x)} \nu(t)\, dt \leq \int_{\phi_k(\sqrt{k})}\nu(t)\, dt \ = \text{O}\big(1/k\big)$$
from $\phi_k(\sqrt{k}) \leq \phi_k(x)$, $\phi_k(\sqrt{k})\sim_{+\infty} (k/\lambda)^{1/2}$, and 
from Markov's inequality (since by hypothesis $\nu$ has a second-order moment). 
Since $\eta < 2$, we have proved that  
$$\sup_{|x| \leq k} \int_{k}^{+\infty} p(x,y)\, dy = 
\text{O}\left(\frac{1}{k^{\eta/2}}\right).$$ 
The same conclusion can be similarly obtained for the term $\sup_{|x| \leq k}\int_{-\infty}^{-k} p(x,y)\, dy$.
Consequently  $\alpha_k$ in (\ref{def-alpha}) satisfies: 
$\alpha_k = \text{O}\big(k^{-\eta/2}\big)$.
Next, to obtain (\ref{Lk-finite}) set $M:=\sup_{x\in \R} \nu(x)$, $M':=\sup_{x\in \R} \nu'(x)$, and $C := \sup_{x\in \R} |x\, \nu'(x)|$. An easy computation gives  
\begin{eqnarray*}
\left|\frac{\partial p}{\partial x}(x,y)\right| &\leq& \frac{M\lambda|x|}
{(\beta+\lambda x^2)^{3/2}} + \frac{M'|\alpha|}
{\beta+\lambda x^2} + \frac{C\lambda|x|}{(\beta+\lambda x^2)^{3/2}}.
\end{eqnarray*}
Thus $D := \sup_{(x,y)\in\R^2}|\frac{\partial p}{\partial x}(x,y)| < \infty$, so that the function $L_{i,k,\theta}$ defined in (\ref{L-i-k-theta}) satisfies (with $\theta=1$): 
$\forall y\in\R,\ L_{i,k,1}(y) \leq D$. Therefore the real numbers $\ell_{k,\theta}$ in (\ref{Lk-finite}) are such that $\ell_{k,1} \leq 2Dk$. The above inequalities and Theorem~\ref{theo-main} provide the desired statement in Proposition~\ref{pro-arch}.  
\end{proof}

\section{A generic algorithm to get $\mathfrak{p}_k(y)$} \label{ap-algo-ar}
Let $(X_n)_{n\in\N}$ be a Markov chain with transition kernel $p(\cdot,\cdot)$. In this section, we propose a generic algorithm to get the material provided by Section~\ref{sect-disc}. Specifically, the focus is on the non-negative function $\mathfrak{p}_k$ (\ref{def-density-pk}) which allows us to obtain the approximating invariant probability given by Proposition \ref{def_approx}. According to Section~\ref{sect-disc}, the following algorithm can be proposed.  
\leftmargini 1.2em
\begin{enumerate} \setlength{\itemsep}{-1mm}
	\item Fix the positive integer $k$ such that $\X_k :=[-k,k[$ and  choose the integers $k^{-}$ et $k^{+}$ such that $[k^{-},k^{+}[ \subset X_k$ (you can take $k^{-}=-k,k^{+}=k$). 
		\item Choose a mesh $\delta_k$ of the partition of $[k^{-},k^{+}[$  such that the number of intervals of the subdivision is $q_{\max} := (k^{+}-k^{-})/\delta_k \in \N^*$, 

	Let us introduce the ($q_{\max}+1)$ points of the subdivision $\big\{x_{i,k} := k^{-} + i \delta_k, i=0,\ldots, q_{\max}\big\}$ and consider the finite partition $\{\X_{i,k}=[x_{i,k},x_{i+1,k}[, i=0,\ldots, q_{\max}-1\}$ of $[k^{-},k^{+}[$
	
	\item Introduce $$p_{i,k}(y) := \inf_{t\in \X_{i,k}} p(t,y).$$
	\item  Choose $j_0\in\{1,\ldots,q_{\max}-1\}$, then for $j=j_0$ compute:  
\begin{eqnarray*}
B_k(i,j_0)\  &:=& \int_{x_{j_0,k}}^{x_{j_0+1,k}} p_{i,k}(y) \, dy +  1 -  \int_{k^{-}}^{k^{+}} p_{i,k}(y)  \, dy \qquad \text{for $i=0,\ldots,q_{\max}-1$} \\[0.15cm]
B_k(q_{\max},j_0) &:=&  1 
	\end{eqnarray*}
		 Compute for $j=0,\ldots,q_{\max}-1$, $j\neq j_0$,
\begin{eqnarray*}
B_k(i,j)\  &:=&  \int_{x_{j,k}}^{x_{j+1,k}} p_{i,k}(y)\, dy \qquad \text{for $i=0,\ldots,q_{\max}-1$} \\[0.15cm]	
B_k(q_{\max},j) &:=& 0 \qquad \text{pour $i=q_{\max}$}
\end{eqnarray*}
   Set $B(i,j) = 0$ for $j=q_{\max}$ et $i=1,\ldots,q_{\max}$.
  \item Compute the $B_k$-invariant probability vector $\pi_k$ of $B_k$: it has the form $\pi_k = (\{\pi_{i,k}\}_{0\leq i <q_{\max}}\, ,\, 0)$
  \item Finally, the non-negative function $\mathfrak{p}_k(\cdot)$ is defined by (see (\ref{def-density-pk})):
$$\forall y\in\R,\quad \mathfrak{p}_k(y) :=  1_{[k^{-},k^{+}[}(y)\sum_{i=0}^{q_{\max}-1} \pi_{i,k}\,  p_{i,k}(y) .$$

\end{enumerate}

The third step of the algorithm involves the computation of an extreme value of the function $t \mapsto p(t,y)$ on a small interval (length $\delta_k$). Such a numerical minimization may be computationally expensive. But it can be checked that, for AR(1) models in Subsections~\ref{AR1_gauss}, \ref{AR1_autres}, the function $t \mapsto p(t,y)$ has no local minima so that the minimum may be setted to $\min(p(x_{i,k},y),p(x_{i+1,k},y))$. The case of the AR(1) with ARCH(1) errors may produce local minima for some parameter $(\beta, \alpha,\lambda)$. But it can be expected that any approximation of $p_{i,k}(y)$ in Step 3.~does not provide large numerical errors from the fact that it is made on a very small interval of length $\delta_k<<1$.  

Such an algorithm has been implemented using MATLAB software to obtain the numerical results of Section \ref{sec-examp}.

\begin{arem}[Multivariate Markov models] A natural issue is the generalization of the material of Sections \ref{sect-disc} and \ref{sect-conv-gene} to multivariate Markov models. A general discussion is beyond the scope of this paper. We only mention that technical Conditions (\ref{drift},\ref{def-alpha},\ref{Lk-finite}) have natural counterparts for multivariate autoregressive models (e.g. see \cite{MeyTwe93}). Thus it can seen from this section that the main difficulties in a multidimensional framework are computational issues due to  computation of extreme values and integrals.
\end{arem}

\section{Numerical examples} \label{sec-examp}
\subsection{Application to the Gaussian AR(1)} \label{AR1_gauss} 
The benchmark model is the Gaussian linear model where the random variables $\vartheta_n$ in (\ref{ar1}) have Gaussian distribution $\mathcal{N}(0,\sigma^2)$. In such a context, it is well-known that the invariant probability $\pi$ of the Markov chain $(X_n)_{n \in \N}$ specified by (\ref{ar1}) is $\cN(0,\sigma^2/(1-\varrho^2))$. Therefore the pdf's $\nu$ and $\mathfrak{p}$ are 
	\begin{equation} \label{cas_gaussian}
	\nu(y)=\frac{1}{\sqrt{2\pi \sigma^2}} \exp\left(-\frac{y^2}{2 \sigma^2}\right) \qquad \mathfrak{p}(y)=\frac{\sqrt{1-\varrho^2}}{\sqrt{2\pi \sigma^2}} \exp\left(-\frac{(1-\varrho^2)y^2}{2 \sigma^2}\right).
\end{equation}
Using the algorithm in Section~\ref{ap-algo-ar}, we obtain the following numerical results. For the sake of simplicity, set $\sigma^2:=1$. The support of the approximation is $\X_k:=[-k,k[$ for specific value of the positive integer $k$, and $\delta_k$ is the mesh of the partition of $\X_k$ used for the computation. The supremum norm of the error vector $v_k:=\big(\mathfrak{p}_k(x_{i,k}) - \mathfrak{p}(x_{i,k})\big)_{i=0,\ldots,q_k}$  between $\mathfrak{p}_k$ and $\mathfrak{p}$ on the grid of points $\{x_{i,k},i=0,\ldots,q_k\}$ given by the partition of $\X_k$  (see (\ref{X-i-k-ar})) is denoted by $\|v_k\|_{\infty}$ and reported in Table~\ref{Table_gaussien}. The Riemann sum estimation $\|v_{k}\|_{1,R}:= \delta_k \|v_{k}\|_{1}$ of $\|\mathfrak{p}_k-\mathfrak{p}\|_{\L^1}$ is provided.  These errors are computed using a decreasing sequence of meshes $\delta_k$ and a support $\X_k$ selected according to the comments of Remark~\ref{Rem_support}.  As it can be seen, the quality of the approximation is quite satisfactory. Figure~\ref{Figure_gussien} gives the graphs of the two pdf $\mathfrak{p}_k$ and $\nu$. Note that the exact invariant pdf is not reported in Figure~\ref{Figure_gussien} since the estimated and exact graphs cannot be distinguished at this scale. From Table~\ref{Table_gaussien}, whatever the value of $\varrho$, the errors norms $\|v_{k}\|_{\infty}$ or $\|v_{k}\|_{1,R}$ scale linearly with the mesh $\delta_k$. 
\begin{table}[h]
\begin{center}
{\scriptsize \begin{tabular}{r|ccc|ccc|ccc|} \cline{2-10}
& \multicolumn{9}{c|}{$\boldsymbol\varrho$} \\ \cline{2-10} 
 & \multicolumn{3}{c|}{$\boldsymbol{0.5}$} & \multicolumn{3}{c|}{$\boldsymbol{0.7}$} & \multicolumn{3}{c|}{ $\boldsymbol{0.9}$} \\ \hline\hline
$\boldsymbol k$ & \multicolumn{3}{c|}{8}  
& \multicolumn{3}{c|}{14}&  \multicolumn{3}{c|}{40}\\ \hline
$\boldsymbol{\delta_k}$ & 0.05 & 0.02 & 0.005  
& 0.05 & 0.02 & 0.005 & 0.05 & 0.02 & 0.005\\ \hline
$\boldsymbol{\|v_k\|_{1,R}}$ & 0.01 & 0.004 &  $0.001$  
& 0.0151 &  0.0061 & 0.0015  & 0.0540 & 0.025 & 0.0058\\ \hline
$\boldsymbol{\|v_k\|_{\infty}}$ & 0.0025 & 0.001 &  $2.45\times 10^{-04}$  
& 0.0035 & 0.0014 & $3.45\times 10^{-4}$ & 0.0099 & 0.0041 & 0.0011\\ \hline
\end{tabular}}
\end{center}
\vspace*{-3mm}
\caption{Numerical results for the Gaussian linear model}
\label{Table_gaussien}
\end{table}

\begin{figure}[h]
\centering
\includegraphics[scale=0.6]{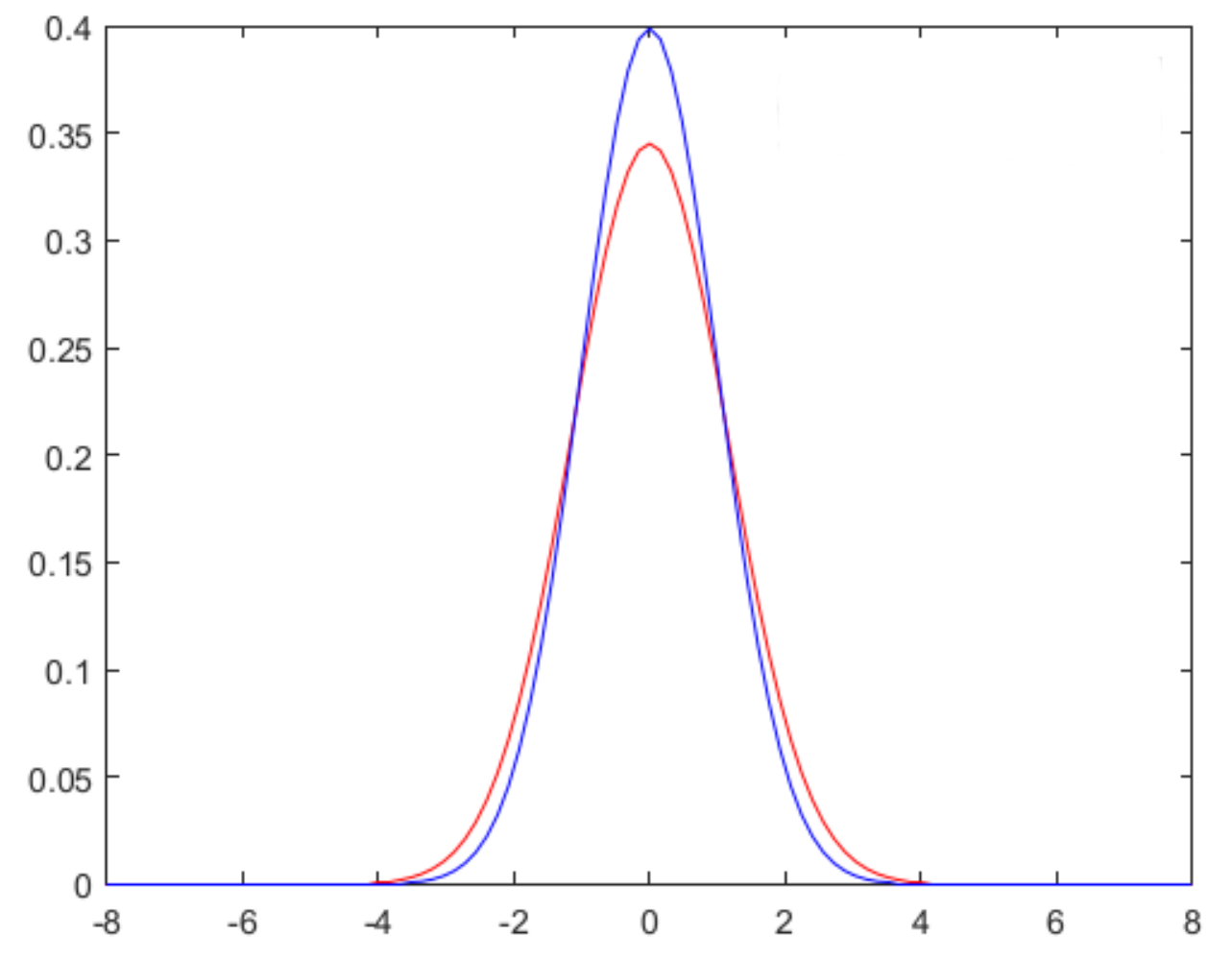} 

\includegraphics[scale=0.6]{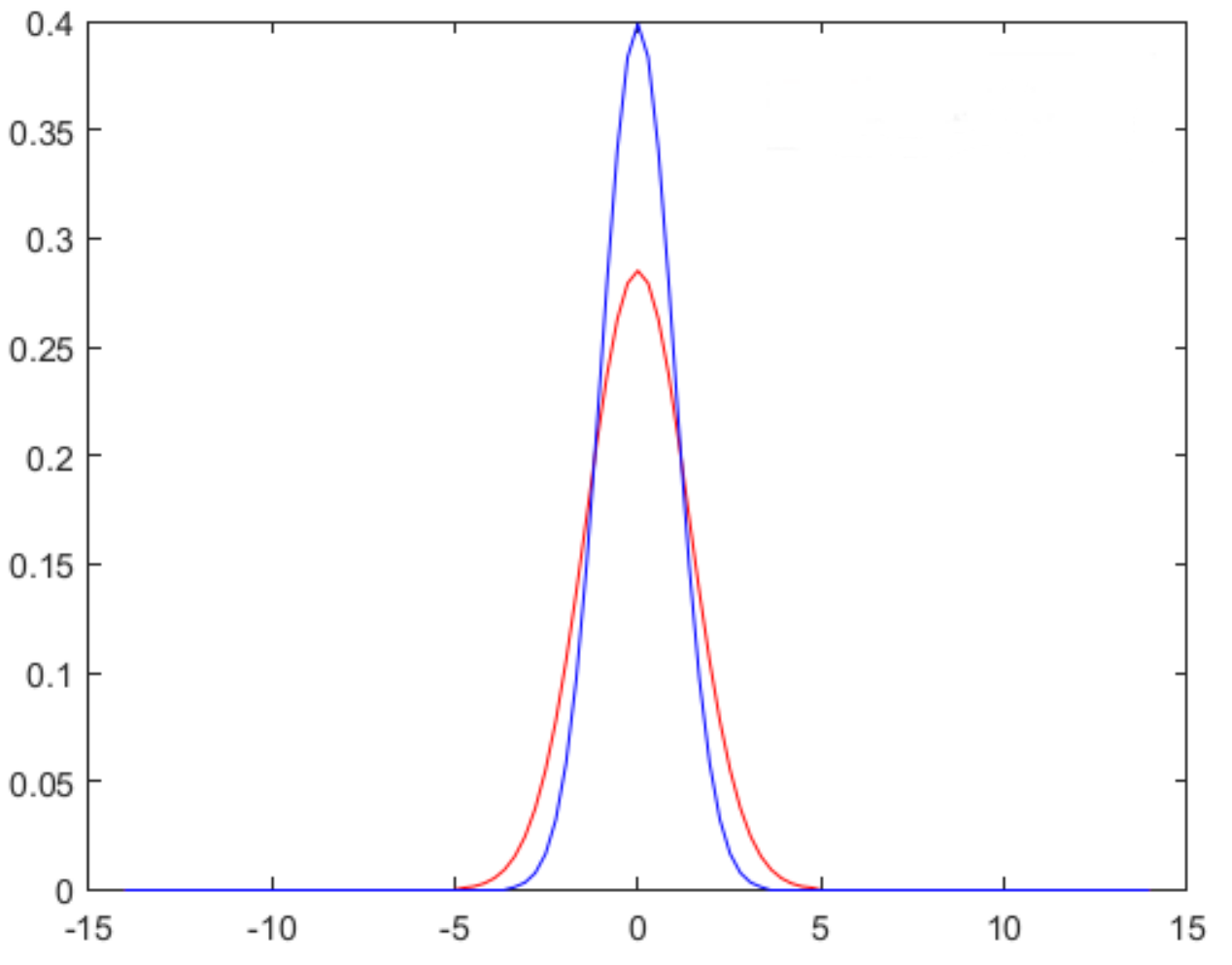} 

\includegraphics[scale=0.6]{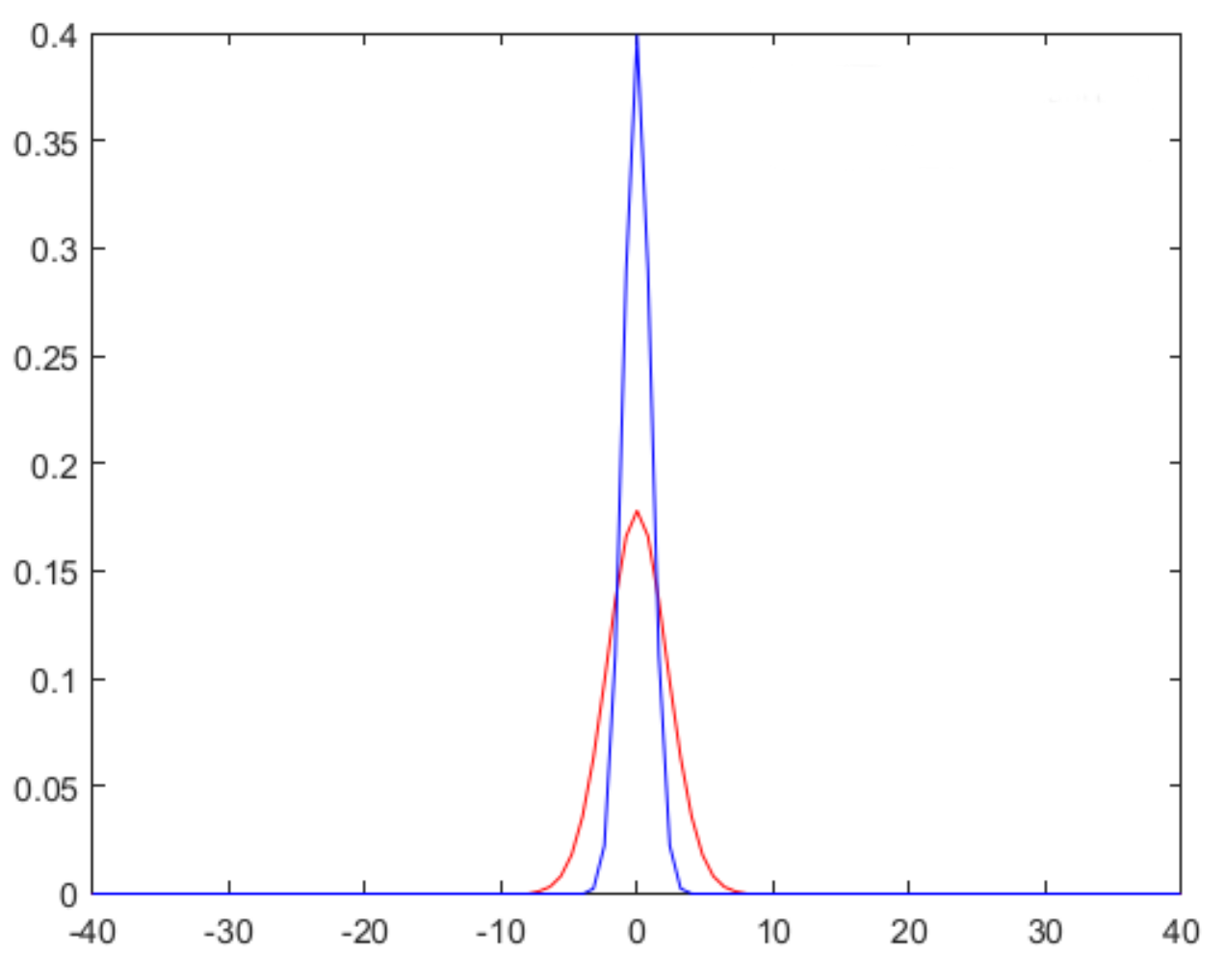}
\caption{(Gauss) For $\varrho=0.5, 0.7, 0.9$ and $\delta_{k}=0.02$: graphs  of the invariant pdf $\mathfrak{p}_{8},\mathfrak{p}_{14},\mathfrak{p}_{40}$ (red) and the innovation pdf $\nu$ (blue)}
\label{Figure_gussien}
\end{figure}

\begin{arem} \label{Rem_support}  The algorithm is sensitive to the support $\X_k$ of the approximate function $\mathfrak{p}_k$. Indeed, if the value of $k$ in $\X_k$ is too small with respect to the support of the target pdf $\mathfrak{p}$ of $\pi$, then the approximate function $\mathfrak{p}_k$ may appear to be far from the target $\mathfrak{p}$.
 
\end{arem}

\subsection{Applications to AR(1) where the invariant pdf $\mathfrak{p}$ is unknown} \label{AR1_autres}

When the target pdf $\mathfrak{p}$ is unknown, the set $\X_k$ (the support of $\mathfrak{p}_k$, see  (\ref{X-i-k-ar})) can be chosen as follows. If the innovation pdf $\nu$ has a support contained in $[-a,a]$  and if $X_0=0$, then $\P\{X_n \in[-s_n,s_n]\}=1$ with $s_n:=a\sum_{k=0}^{n-1} \varrho^k$, so that the pdf $\mathfrak{p}$ has support $[-s,s]$ with $s:=a/(1-\varrho)$. If $\nu$ is not compactly supported, then the previous remark may be applied with $a$ such that $\nu(x)$ is meaningless for $|x|>a$. Obviously this remark may be easily adapted when the exact or approximated support of $\nu$ is contained in $[0,a]$. In the previous Gaussian case, although this question is less relevant since the target pdf $\mathfrak{p}$ is known, we take $a=4$ and $s=4/(1-\varrho)$ in Figure~\ref{Figure_gussien}.  

\subsubsection{Exponential innovation distribution}
In this part, the innovation distribution $\nu$ is set to the exponential one with parameter $1$. Recall that the pdf $\nu$ must satisfy the regularity conditions of Proposition~\ref{rate-ar}. Therefore, as discussed in Remark~\ref{moins-reg}, the pdf $\nu_3$ is used as input in the algorithm instead of $\nu$: 
$$\nu_{3,\varrho}(x) = \frac{1}{(1-\varrho)^2} \bigg(e^{-x} - e^{-x/\varrho}  + \frac{\varrho\, \big(e^{-x/\varrho^2} - e^{-x}\big)}{1+\varrho}\bigg)\, 1_{[0,+\infty[}(x)$$
and the dynamics is given by (\ref{AR_itere}) with $\ell :=3$.
The support of $\nu_{3,\varrho}$ may be truncated to $[0,a_{3,\varrho}]$ with $a_{3,0.5}=11,a_{3,0.7}=12, a_{3,0.9}=14$, so that the support of $\mathfrak{p}$ may be truncated to $[0,s_{\varrho}]$ with $s_{\varrho}=\lfloor a_{3,\varrho}/(1-\varrho^3)\rfloor +1$, that is $s_{0.5}=13,s_{0.7}=19,s_{0.9}=52$.  
Thus we use the interval $[0,13], [0,19], [0,52]$ as $\X_k$ for $\varrho:=0.5,0.7,0.9$ (apply the above remark with $\nu_{3,\varrho}$ and $\varrho^3$ in place of $\nu$ and $\varrho$).   
In Figure~\ref{Expo_rho05} are reported the graphs of the estimated $\mathfrak{p}_k$ of the (unknown) invariant pdf $\mathfrak{p}$ for $\varrho=0.5, 0.7, 0.9$. 
\begin{figure}[h]
	\centering
		\includegraphics[scale=0.6]{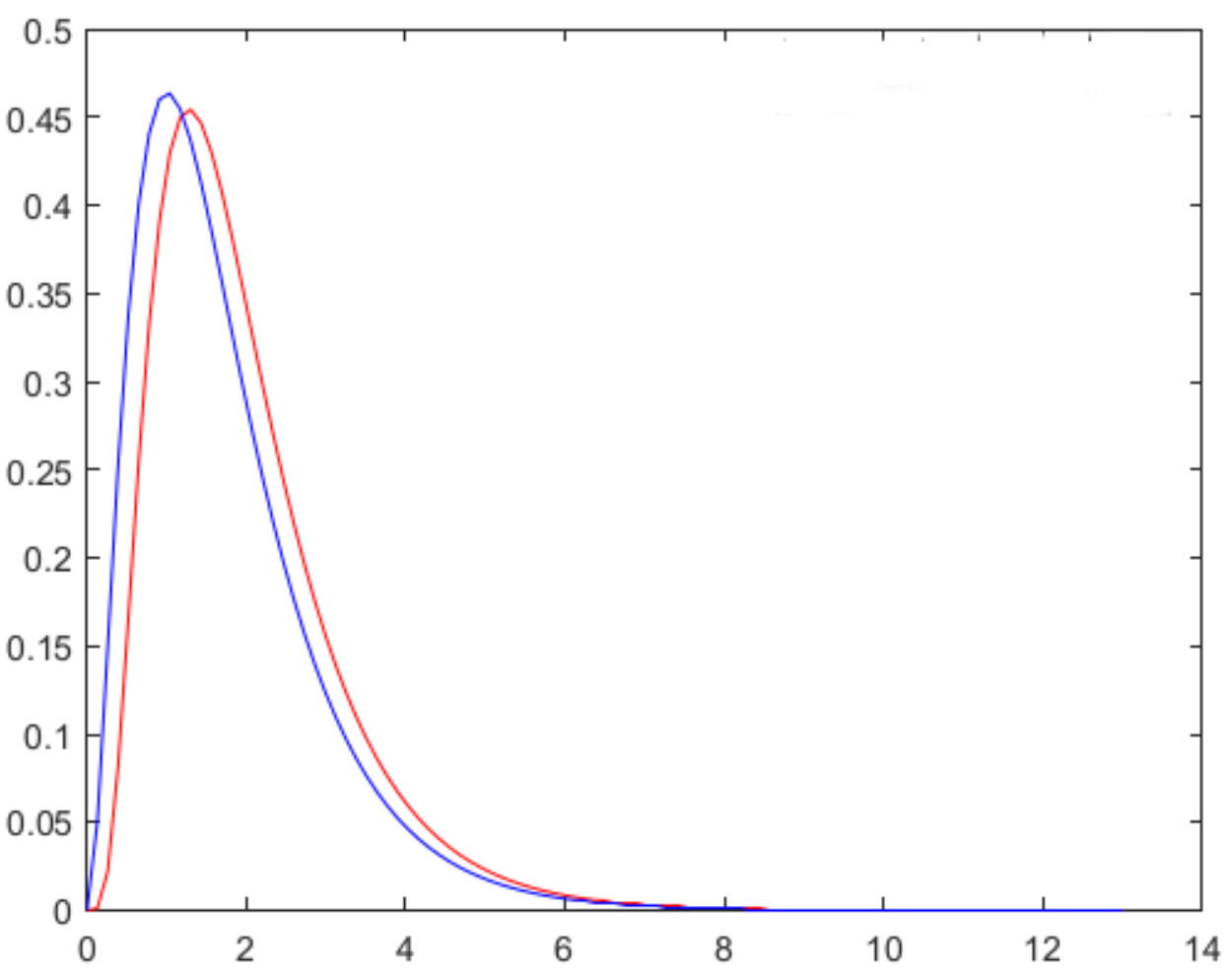} 
		
		\includegraphics[scale=0.6]{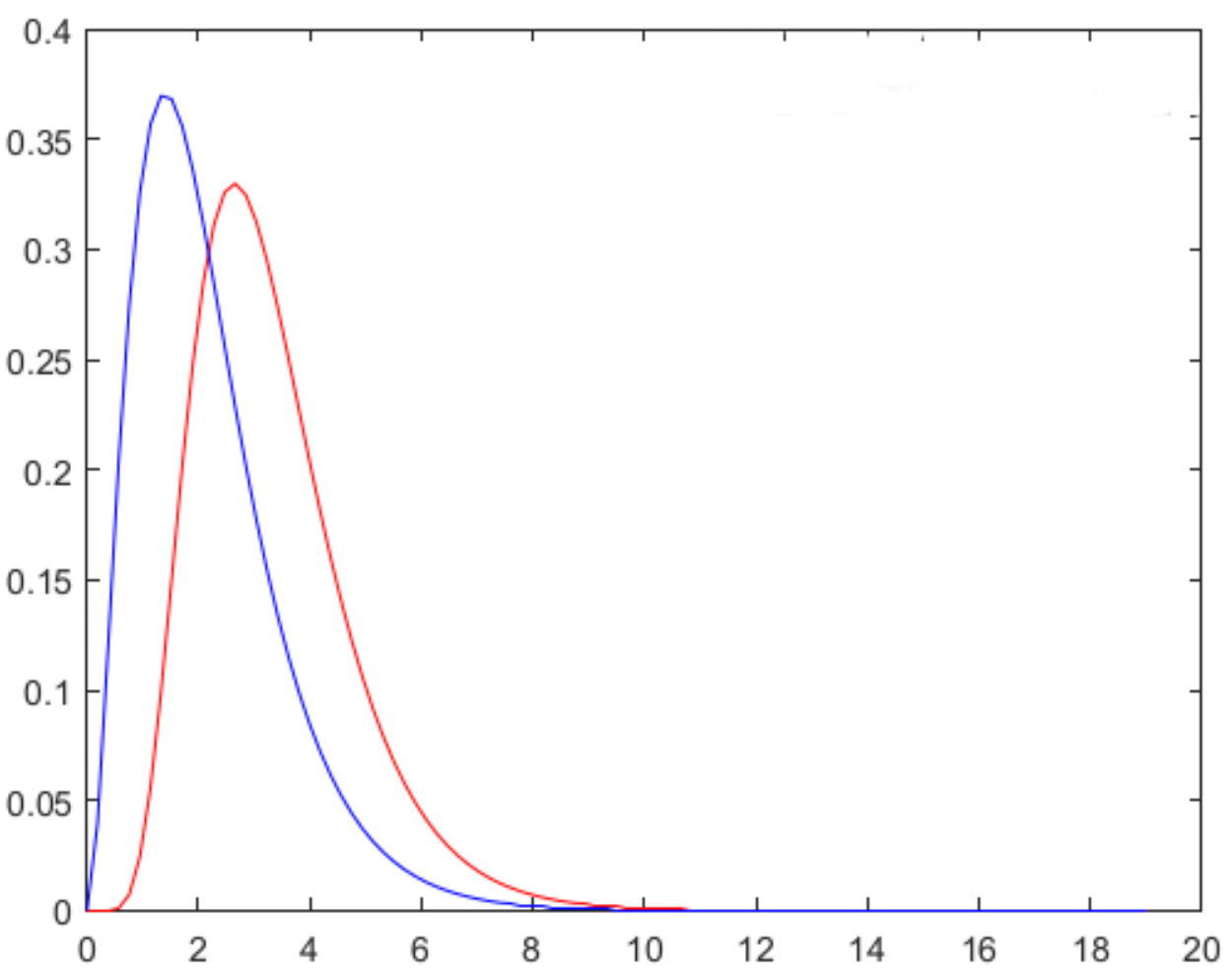}  
		
		\includegraphics[scale=0.6]{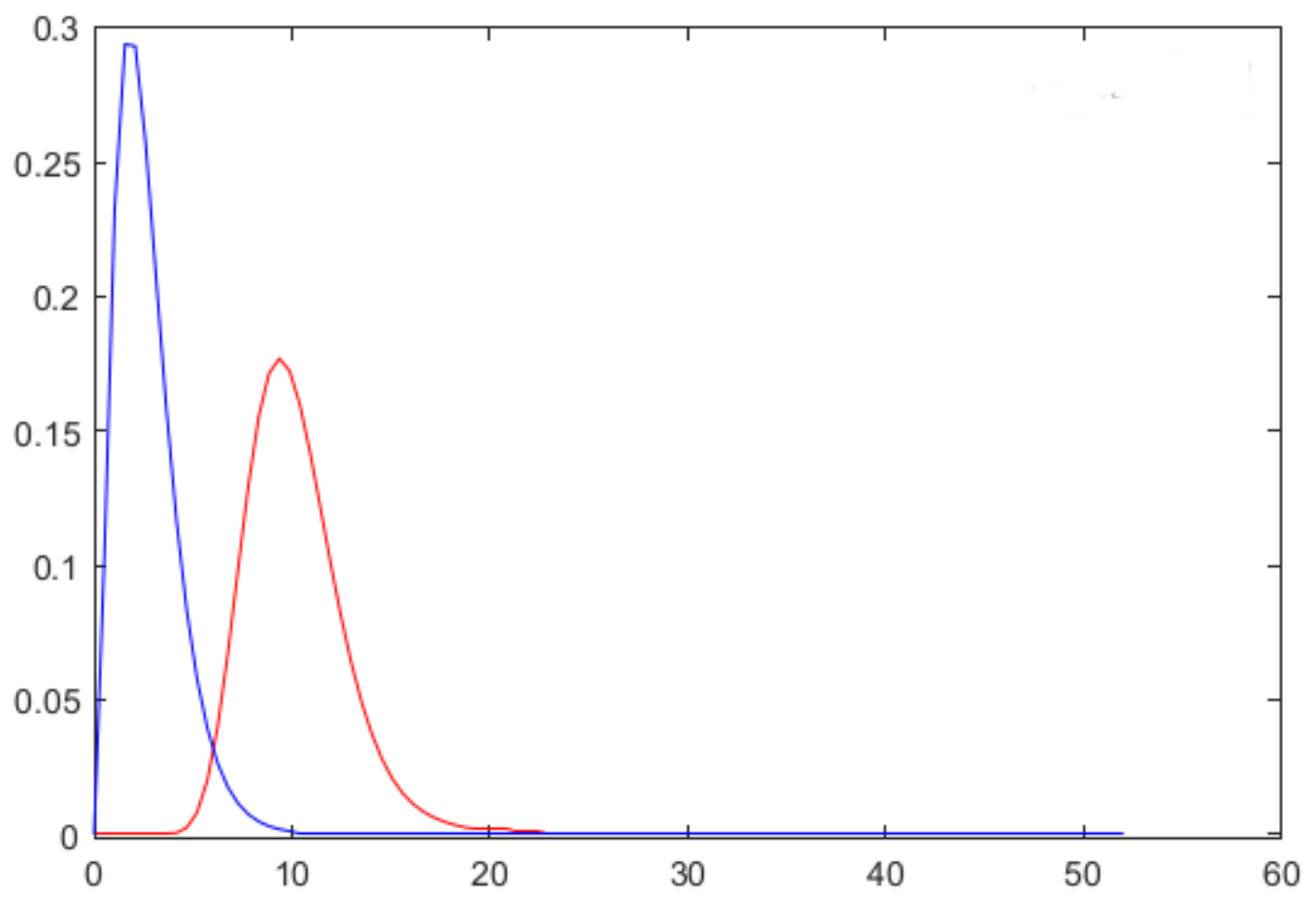}
\caption{(Expo) For $\varrho=0.5,0.7,0.9$ with $\delta_{k}=0.02$: graphs of the estimated invariant pdf $\mathfrak{p}_{k}$ (red) and the pdf $\nu_{3,\varrho}$ (blue)}
\label{Expo_rho05}
\end{figure}

The invariant probability distribution $\pi$ with pdf $\mathfrak{p}$ satisfies $\pi P = \pi$, that is: $\int_{\R} \mathfrak{p}(x) \, p(x,\cdot) dx = \mathfrak{p}(\cdot)$. Such a relation can be checked on the grid of points $\{x_{i,k},i=0,\ldots,q_k\}$ given by the partition of $\X_k$ (see (\ref{X-i-k-ar})): 
\[ \forall x_{i,k}, \quad \int_{\R} \mathfrak{p}(x) \, p(x,x_{i,k}) dx = \mathfrak{p}(x_{i,k}).\] 
The integral on the left hand side can be estimated using the Riemann sum denoted by $\widetilde{\mathfrak{p}}_k(x_{i,k}):=\sum_j \mathfrak{p}_k(x_{j,k}) P(x_{j,k},x_{i,k}) \delta_k$. Therefore, in order to get some confidence into the estimated invariant pdf $\mathfrak{p}_k$,  the uniform norm of the following vector $ w_k:=\big(\mathfrak{p}_k(x_{i,k}) - \widetilde{\mathfrak{p}}_k(x_{i,k})\big)_{i=0,\ldots,q_k}$ is  reported in Table~\ref{Table_P-invariance}. As it can be seen, the results are  satisfactory.

\begin{arem}
The case $\varrho=0.9$ (even $\varrho=0.8$) shows that the graphs of $\nu_{3,\varrho}$ and $\mathfrak{p}$ (given by the approximation $\mathfrak{p}_k$) are very far. Consequently, in this case, the method of \cite{Hai98,AndHra00,AndHra07}  requires to compute $h_N$ via (\ref{eq-hn}) for some quite large integer $N$. Since the use of (\ref{eq-hn}) is recursive, the successive approximations $h_1,\ldots,h_N$ should involve large cumulative errors. A similar comment  holds true in the forthcoming case of the uniform innovation distribution. As mentioned in Introduction, our method does not contain this drawback since it is not based on a recursive algorithm. 
\end{arem}

\subsubsection{Uniform innovation distribution}

Here, the innovation distribution $\nu$ is set to the uniform one on $[0,1]$. 
As discussed in Remark~\ref{moins-reg}, the pdf $\nu_3$ is used as input in the algorithm instead of $\nu$ (see \cite[p 281-282]{AndHra07} for an explicit formula). 
Here the dynamics is given by (\ref{AR_itere}) with $\ell :=3$. The graphs of $\nu_{3,\varrho}$ with $\varrho = 0.4, 0.5, 0.6, 0.7, 0.8, 0.9$ are reported in Figures~\ref{Unif_rho04}, \ref{Unif_rho07} (blue curves). 
The support of $\nu_{3,\varrho}$ is $[0,1+\varrho+\varrho^2]$ so that the support of the target pdf $\mathfrak{p_{\varrho}}$ is included into $[0,s_{\varrho}]$ with $s_{\varrho}:=\lfloor(1+\varrho+\varrho^2)/(1-\varrho^3) \rfloor+1 $. Thus we use the intervals $[0,2], [0,2], [0,3], [0,4], [0,5], [0,10]$ as set $\X_k$ for $\varrho:=0.4,0.5,0.6,0.7,0.8,0.9$.    
In  Figure~\ref{Unif_rho04}, we report the graphs of the approximated function $\mathfrak{p}_k$ of the (unknown) invariant pdf $\mathfrak{p}$ and the pdf $\nu_{3,\varrho}$ for $\varrho=0.4,0.5,0.6$.  The graphs for $\varrho=0.7,0.8,0.9$ are reported in  Figure~\ref{Unif_rho07}. As in the exponential case, the expected invariance of the estimated pdf $\mathfrak{p}_k$ is evaluated by the uniform norm of the following vector $ w_k:=\big(\mathfrak{p}_k(x_{i,k}) - \widetilde{\mathfrak{p}}_k(x_{i,k})\big)_{i=0,\ldots,q_k}$ (see Table~\ref{Table_P-invariance}). The results are still satisfactory. 

\begin{table}[h]
\begin{center}
{\scriptsize \begin{tabular}{r|ccc|} \cline{2-4}
& \multicolumn{3}{c|}{\text{Expo}} \\ \cline{1-4}   
$\varrho$ & 0.5 & 0.7 & 0.9 \\ \hline
$\|w_k\|_{\infty}$ & $8.73\times 10^{-4}$ & $9.47 \times 10^{-4}$ &  0.0013 \\\hline
\end{tabular}
\hspace*{5mm} 
\begin{tabular}{r|ccc|} \cline{2-4}
& \multicolumn{3}{c|}{\text{Unif}} \\ \cline{1-4}   
$\varrho$ & 0.7 & 0.8 & 0.9 \\ \hline
$\|w_k\|_{\infty}$ & 0.0045 & 0.0050 &  0.0051 \\\hline
\end{tabular} 
}
\end{center}
\vspace*{-3mm}
\caption{AR(1): checking $P$-invariance of the estimated pdf $\mathfrak{p}_k$ with $\delta_k:=0.02$}
\label{Table_P-invariance}
\end{table}

\begin{figure}[h]
	\centering
		\includegraphics[scale=0.6]{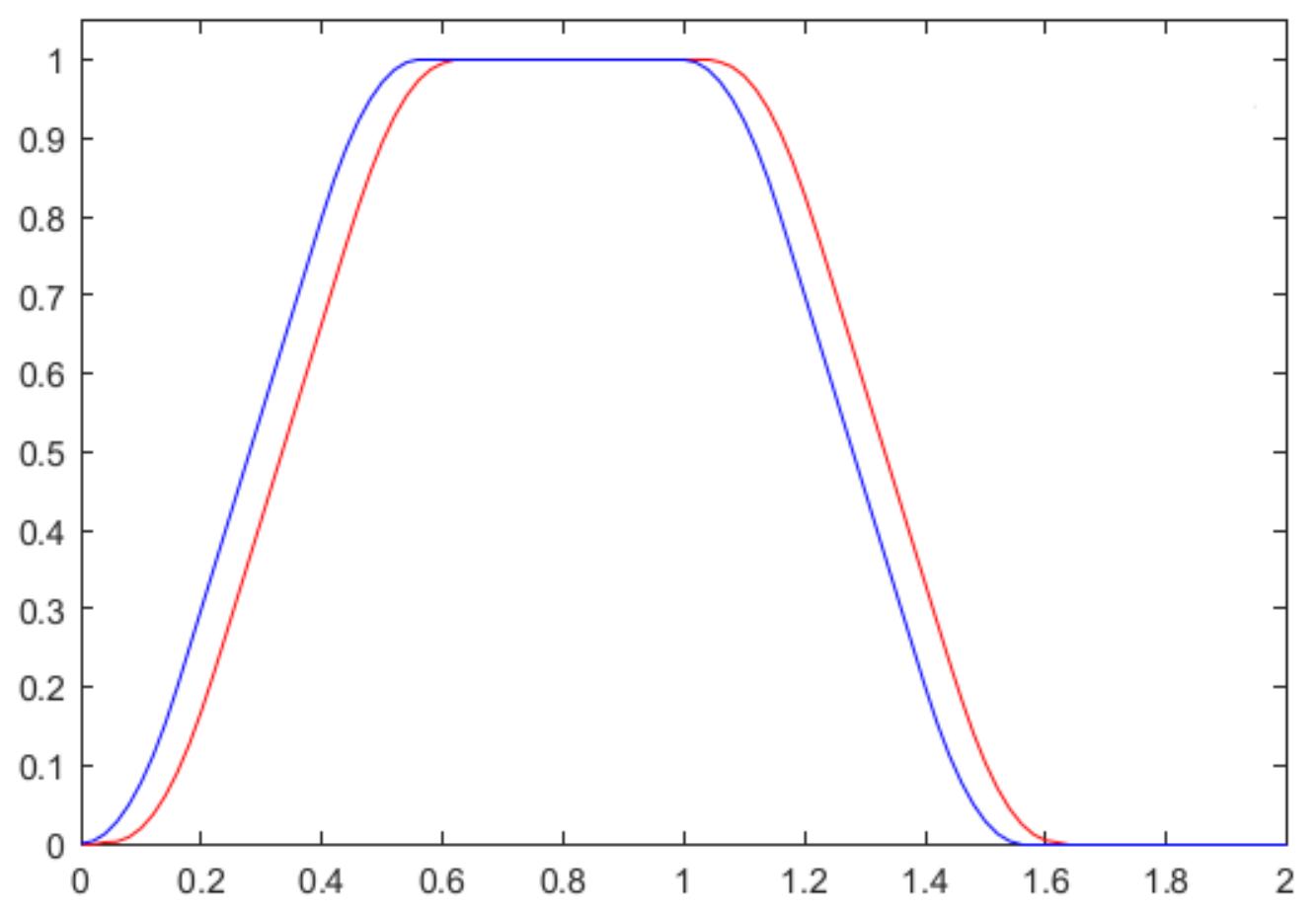}	
		
		\includegraphics[scale=0.6]{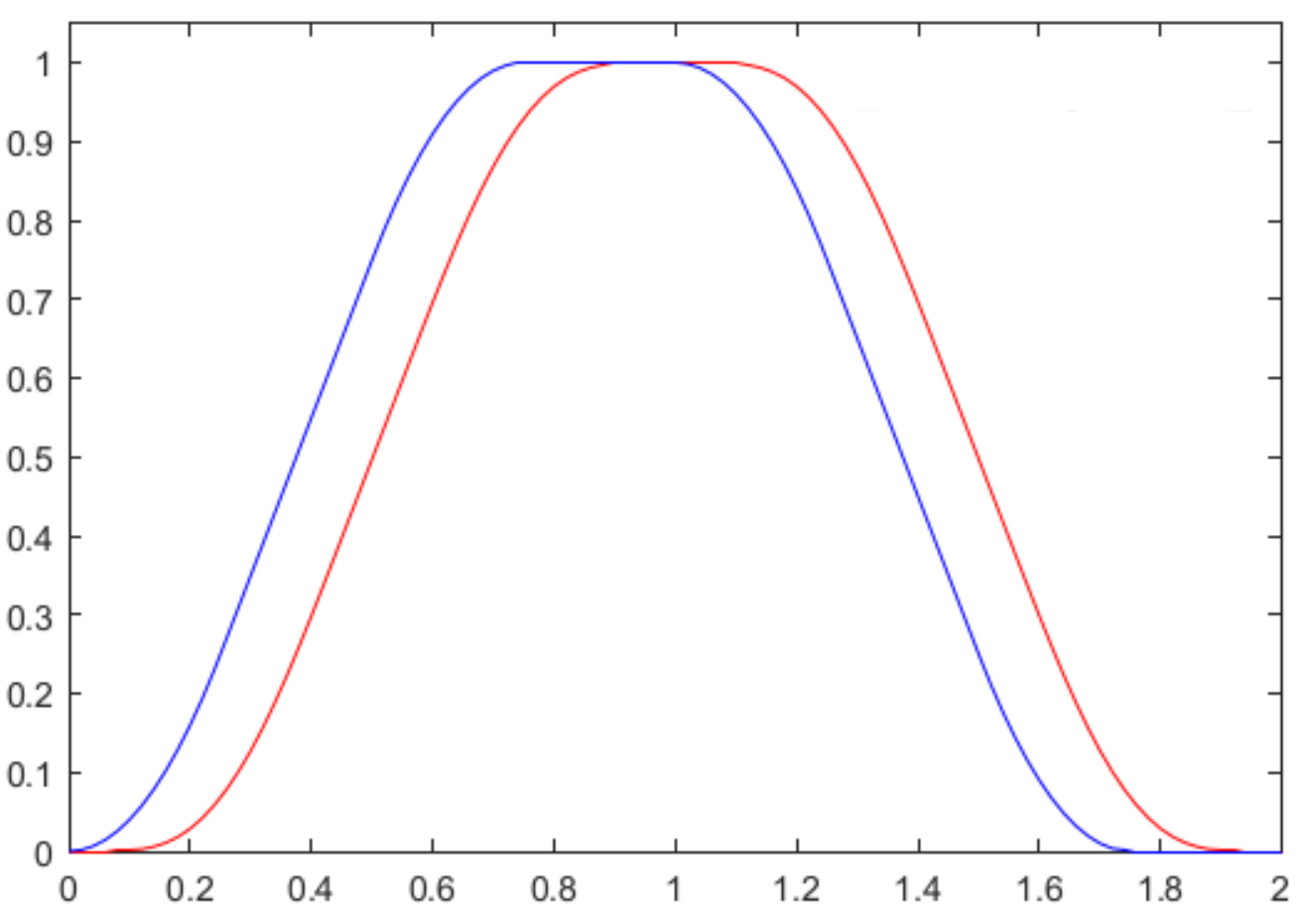}	
		
		\includegraphics[scale=0.6]{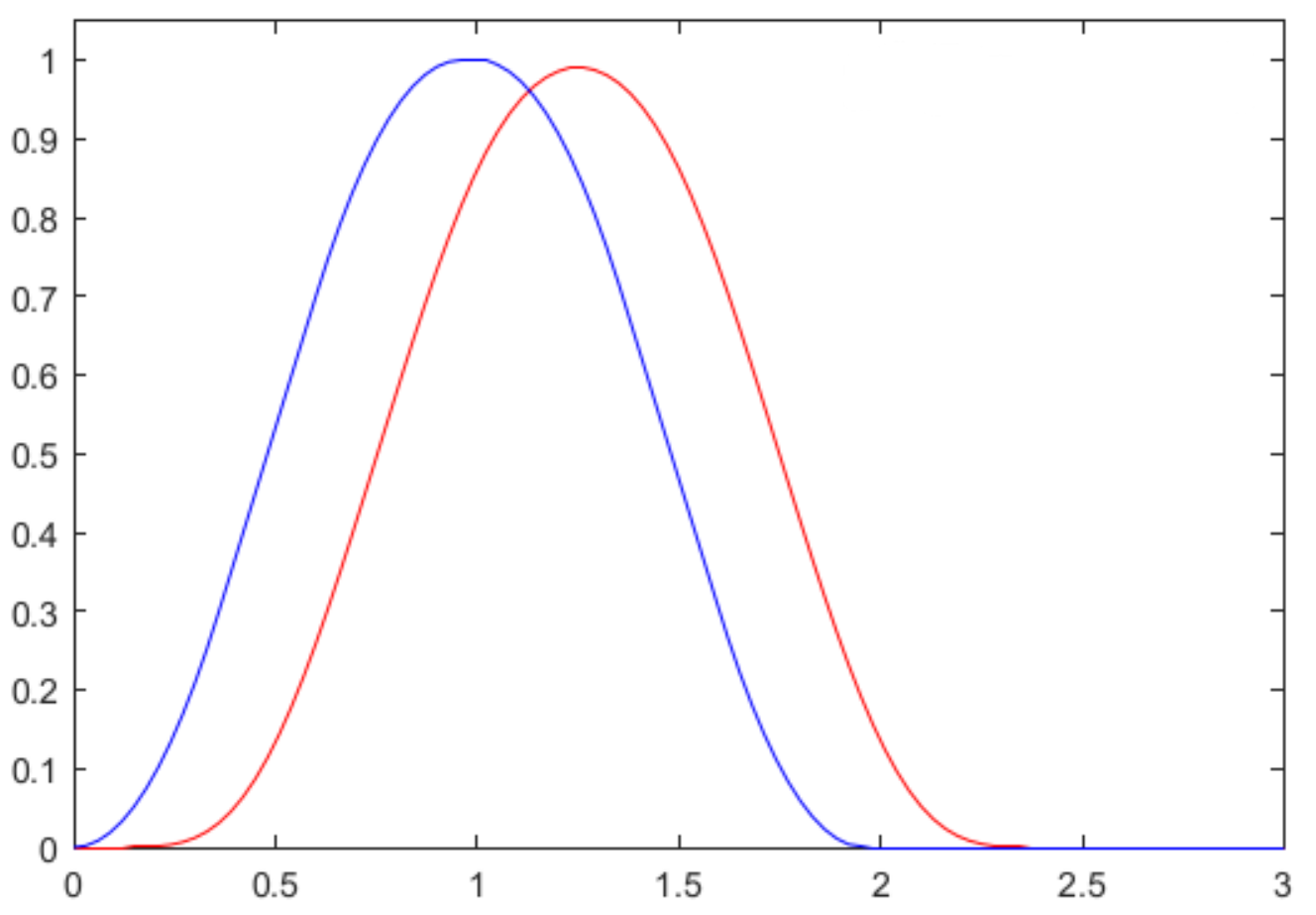}
\vspace*{-2mm}
\caption{(Unif) For $\varrho=0.4, 0.5,0.6$ with $\delta_{k}=0.02$: graphs of the estimated pdf $\mathfrak{p}_{k}$ (red)  and $\nu_{3,\varrho}$ (blue)}
\label{Unif_rho04}
\end{figure}
\begin{figure}[h]
	\centering
		\includegraphics[scale=0.6]{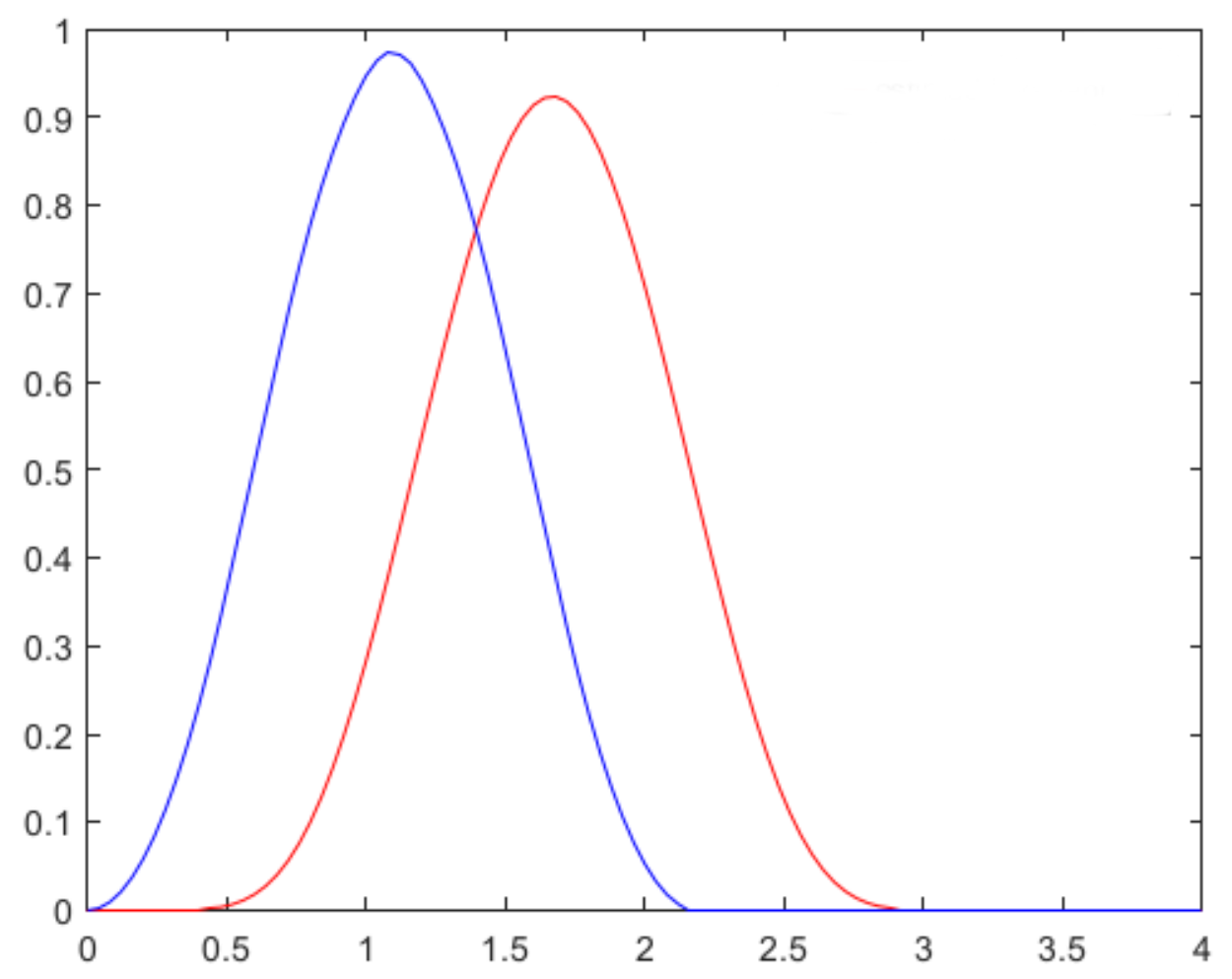}	
		
		\includegraphics[scale=0.6]{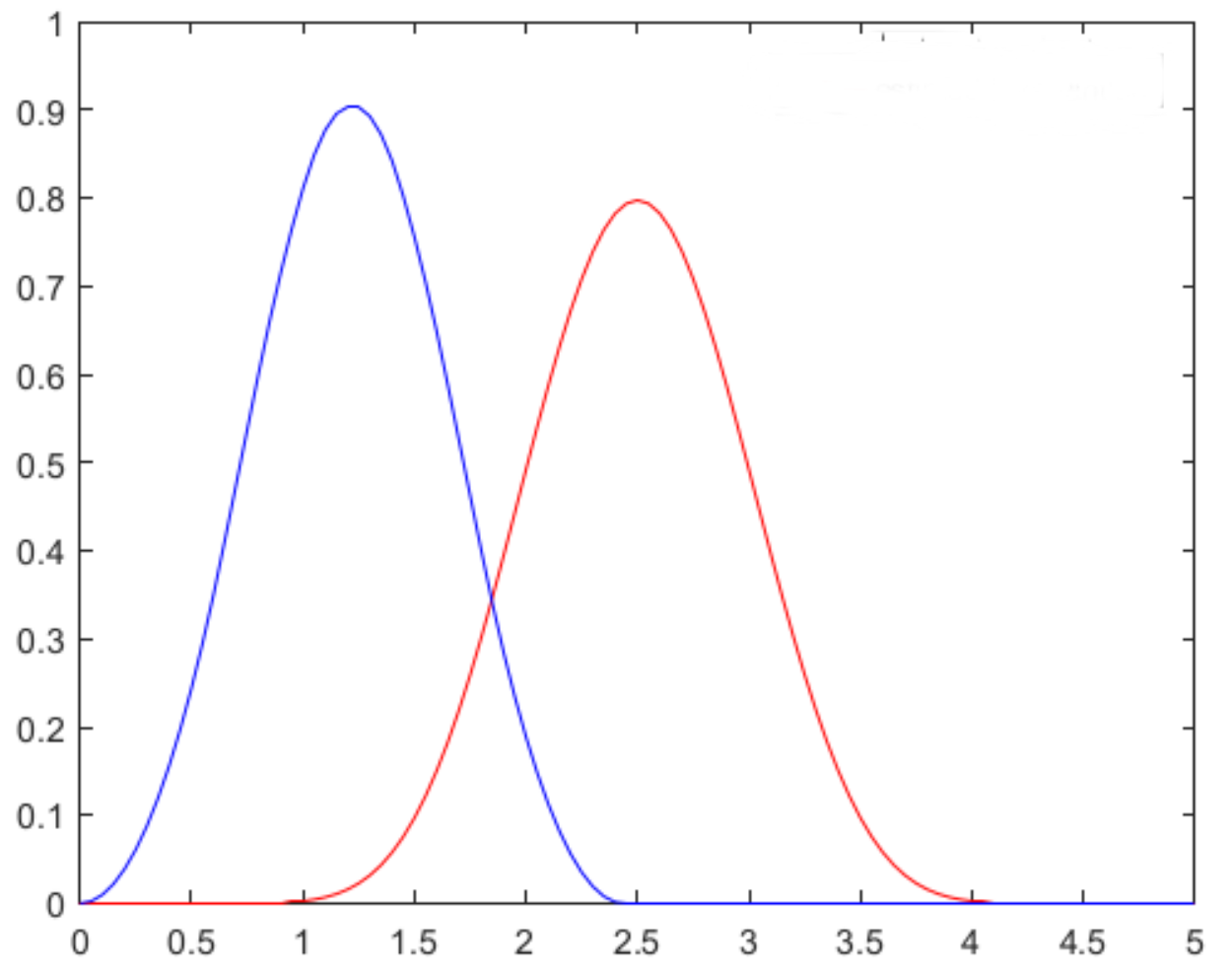}	
		
		\includegraphics[scale=0.6]{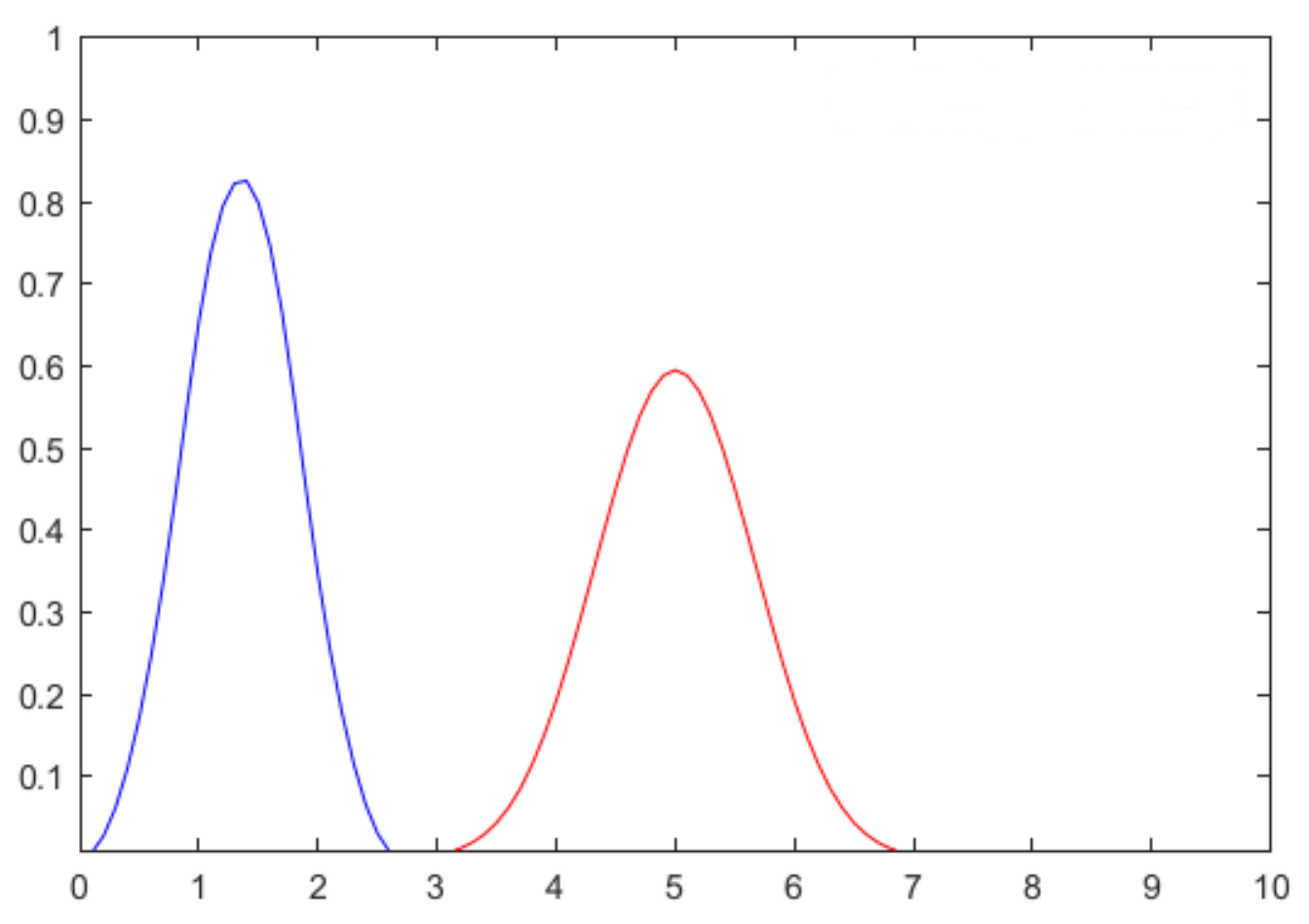}
\vspace*{-2mm}
\caption{(Unif) For $\varrho=0.7, 0.8,0.9$ with $\delta_{k}=0.02$: graphs of the estimated pdf $\mathfrak{p}_{k}$ (red)  and $\nu_{3,\varrho}$ (blue)}
\label{Unif_rho07}
\end{figure}

\subsubsection{AR(1) with ARCH(1) errors} \label{subsec-num-arch} 
In this part, we apply our generic algorithm to the autoregressive model with ARCH(1) errors and transition kernel defined in (\ref{AR-arch}). The innovation distribution is the standard Gaussian one, that is $ \vartheta_n \sim \mathcal{N}(0,1)$. The estimated invariant pdf $\mathfrak{p}_{15}$ with support $\X_{15} = [-15,15]$ and the Gaussian pdf are reported in Figure~\ref{ARCH} when $(\beta, \alpha, \lambda)=(1, 0.7, 0.2)$. 
As for AR(1) models, the invariance property of the estimated pdf $\mathfrak{p}_{15}$ is evaluated by $w_{15} := \|\big(\mathfrak{p}_{15}(x_{i,15}) - \widetilde{\mathfrak{p}}_{15}(x_{i,15})\big)_{i=0,\ldots,q_{15}}\|_{\infty}= 0.0223$. 
\begin{figure}[h]
	\centering
\includegraphics[scale=0.6]{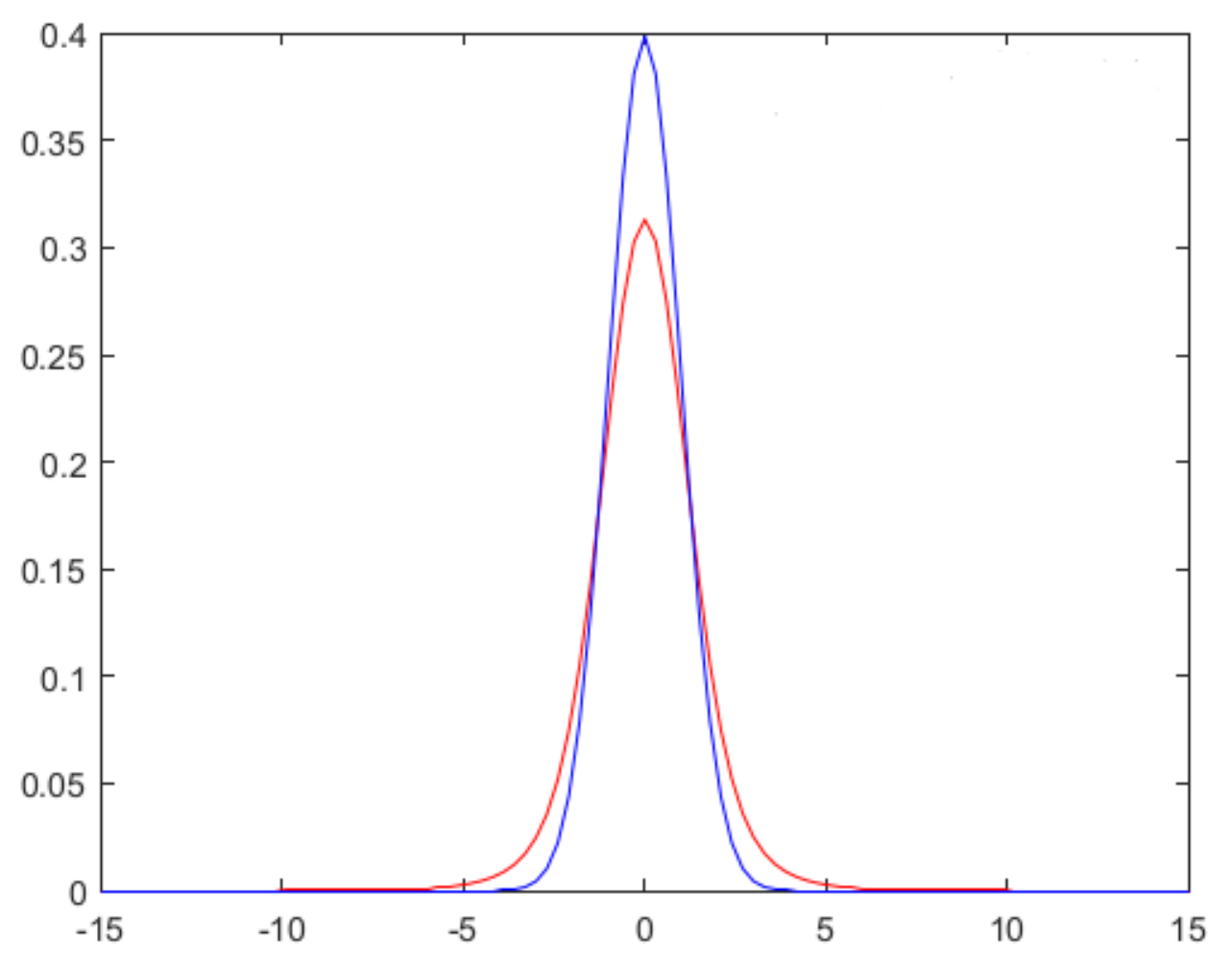}
\caption{(ARCH) For $\delta_{15}=0.02$: graphs of the estimated pdf $\mathfrak{p}_{15}$ (red)  and the innovation pdf (blue)}
\label{ARCH}
\end{figure}

\end{document}